\documentclass[reqno]{amsart}

\usepackage{amsmath, amssymb, amsthm}
\usepackage{mathtools}
\usepackage[utf8]{inputenc}
\usepackage{booktabs}
\usepackage{graphicx}
\usepackage{xcolor}
\usepackage{hyperref}
\hypersetup{colorlinks=true,linkcolor={blue!50!black},citecolor={green!40!black},urlcolor={blue!60!black}}
\usepackage[capitalize,nameinlink]{cleveref}
\usepackage{caption}

\newcommand{\M}{\mathcal{M}}
\newcommand{\Q}{\mathbb{Q}}
\newcommand{\Z}{\mathbb{Z}}

\newcommand{\R}{\mathbb{R}}

\renewcommand{\P}{\mathbb{P}}
\newcommand{\Aut}{\operatorname{Aut}}
\newcommand{\Br}{\operatorname{Br}}
\newcommand{\inv}{\operatorname{inv}}
\newcommand{\Res}{\operatorname{Res}}
\newcommand{\Rat}{\operatorname{Rat}}
\newcommand{\PGL}{\operatorname{PGL}}
\newcommand{\SL}{\operatorname{SL}}
\newcommand{\GL}{\operatorname{GL}}
\newcommand{\SO}{\operatorname{SO}}
\newcommand{\Lo}[1]{\mathcal{L}_{#1}}
\newcommand{\Nf}{N_{\mathrm{fine}}}
\newcommand{\Xb}{\overline{X}}
\newcommand{\mm}{m_{12}}
\newcommand{\w}{\mathbf{w}}

\newtheorem{thm}{Theorem}[section]      			
\newtheorem{prop}[thm]{Proposition}		
\newtheorem{lem}[thm]{Lemma}				
\newtheorem{cor}[thm]{Corollary}			
\newtheorem{conj}[thm]{Conjecture}			

\newtheorem*{conjecture*}{Conjecture}
\theoremstyle{definition}
\newtheorem{defn}[thm]{Definition}			
\newtheorem{exa}[thm]{Example}			
\newtheorem{rem}[thm]{Remark}				
\newtheorem{alg}[thm]{Algorithm}			

\crefname{thm}{Thm.}{Thms.}
\crefname{prop}{Prop.}{Props.}
\crefname{lem}{Lem.}{Lems.}
\crefname{cor}{Cor.}{Cors.}
\crefname{conj}{Conj.}{Conjs.}
\crefname{defn}{Def.}{Defs.}
\crefname{exa}{Ex.}{Exs.}
\crefname{rem}{Rem.}{Rems.}
\crefname{alg}{Alg.}{Algs.}
\crefname{equation}{Eq.}{Eqs.}
\crefname{section}{Sec.}{Secs.}
\crefname{figure}{Fig.}{Figs.}
\crefname{table}{Table}{Tables}

\providecommand{\Nseven}{$214\,260\,288$}

\usepackage{geometry}

\title{Fine points of the moduli space of cubic rational maps}

\author{Tanush Shaska}
\address{Department of Mathematics and Statistics, Oakland University, Rochester, MI, 48309}
\email{shaska@oakland.edu}

\date{\today}

\begin{document}

\begin{abstract}
A rational point of the moduli space $\M_3$ of cubic rational maps is \emph{fine} if its conjugacy class contains a map
defined over $\Q$, and \emph{coarse} otherwise. In the invariants $\xi_0,\dots,\xi_5$ that embed $\M_3$ in
$\P(2,2,3,3,4,6)$, we prove that a point with trivial automorphism group is fine if and only if the quaternion algebra
$(-\mm,-2\xi_0)_\Q$ splits, where $\mm=\tfrac13\xi_0^2\xi_1+\xi_0\xi_4-\xi_3^2$; we give the analogous criterion on the
locus $\Lo2$ of maps with an involution and show that all other strata are fine. The proof is constructive and yields an
explicit descent. We prove that the obstruction class is unramified along every divisor of the compactification, that a
prime $p\ge5$ obstructs only if the point reduces modulo $p$ into the closure of one of the two codimension-two loci
$\Lo1$, $\Lo2$ of maps with an involution, and that $\M_3$ has transverse $A_1$-singularities along both, above which
the class ramifies; local densities predict that $\Lo2$ is met modulo $p$ with probability $\asymp1/p$, so that it
carries the obstruction. A census
of all \Nseven{} rational points of weighted moduli height at most $7$ agrees with this mechanism and supports the
conjecture that the proportion of fine points tends to zero like $(\log B)^{-1/2}$.
\end{abstract}

\subjclass[2020]{Primary 37P45, 14D22; Secondary 37P05, 14G05, 14F22, 11R52, 13A50, 68W30}

\keywords{rational maps, moduli spaces, field of moduli, quaternion algebras, weighted heights, arithmetic statistics}

\maketitle

\section{Introduction}

A rational map $\phi\colon\P^1\to\P^1$ of degree $d\ge2$ is determined by a pair of binary forms of degree
$d$ without common zero, and $\PGL_2$ acts on such pairs by conjugation, 
\[
\phi\mapsto\phi^\sigma=\sigma^{-1}\circ\phi\circ\sigma.
\]
The quotient $\M_d$ is a geometric quotient, an affine variety of dimension $2d-2$ \cite{levy-2011},
and a point of $\M_d(\overline\Q)$ is a conjugacy class. A point $P\in\M_d(\Q)$ is a class whose field of
moduli is $\Q$; it need not contain a map defined over $\Q$. The field of moduli is always a field of definition when
$d$ is even, and this fails in odd degree \cite[\S4.10]{silverman-2007-ads}, with the
cubic example $i\bigl(\tfrac{x-1}{x+1}\bigr)^3$. 
In the language of stacks, a $\Q$-point of $[\Rat_d/\PGL_2]$ over $P$ is a $\PGL_2$-torsor $E$ over $\Q$
with an equivariant map $E\to\Rat_d$, that is, a rational map of degree $d$ on the conic $C=E\times^{\PGL_2}\P^1$;
when $\Aut(\phi)$ is trivial there is exactly one such pair over $P$, and $\phi$ descends to $\Q$ if and only if
$C\cong\P^1$. We call $P$ \emph{fine} if $\phi$ descends to $\Q$ and \emph{coarse} otherwise.

This paper is the dynamical counterpart of our study of the rational points of the moduli space $\M_2$ of genus-two
curves ordered by weighted moduli height \cite{2024-03,2026-stats}: we ask how the fine points are distributed among the rational points ordered by height, and what the
arithmetic mechanism is. For cubic maps the moduli space is explicit. In \cite{2024-4-arxiv}  one attaches to $\phi$ its fixed-point quartic and its divergence quadratic, whose six
invariants $\xi_0,\dots,\xi_5$, of weights $2,2,3,3,4,6$, identify $\M_3$ with the complement of the resultant in a
hypersurface $\Xb\colon\xi_5^2=F(\xi_0,\dots,\xi_4)$ of $\P(2,2,3,3,4,6)$. The descent problem is governed by a conic
over the field of moduli \cite[Thms.~4.3, 4.4]{west-cubic}.

We make this explicit. For a point with trivial automorphism group, \cref{thm:criterion} shows that $P$ is fine if and
only if the quaternion algebra $(-\mm,-2\xi_0)$ splits, where $-2\xi_0$ is the discriminant of the divergence and
\[
\mm=\tfrac13\xi_0^2\xi_1+\xi_0\xi_4-\xi_3^2
\]
 is a principal minor of the Gram matrix of three covariant quadratics
(\cref{lem:gram}); on the locus $\Lo2$ of maps with an involution the algebra is $(-2\xi_0,2(\xi_0\xi_1-3\xi_4))$
(\cref{prop:L2}), and all other strata are fine. The proof is constructive: a rational point on the conic gives a map
over $\Q$ by linear algebra (\cref{prop:explicit}). Among the consequences (\cref{cor:consequences}), every point has a
model over $\Q(\sqrt{-2\xi_0})$, and the pseudo-real points of $U$ are those with $\xi_0>0$ and $\mm>0$. The maps
$(3+2\sqrt2)\frac{z^d+1}{z^d-1}$, stated in \cite[\S5]{hidalgo-quispe} not to be definable over $\Q$, are definable over
$\Q$ for every odd $d$ (\cref{ex:hq}).

On the stratum $U$ of maps with trivial automorphism group the obstruction is a class $\beta\in\Br(U)[2]$. It is unramified along every prime divisor
of $\Xb$ (\cref{thm:unram}); a prime $p\ge5$ obstructs at $P$ only if $P$ reduces modulo $p$ into the closure $Z$ of
$\Lo2$ or into the closure of $\Lo1$, and then, under the conditions of \cref{thm:local}, the local invariant is
given by the Legendre symbol of $-2\xi_0$, respectively of $-6(\xi_0\xi_1+3\xi_4)$; and $\Xb$ has transverse $A_1$-singularities along $\Lo2$
and along $\Lo1$, on whose exceptional divisors $\beta$ acquires these residues (\cref{thm:L2geom}). As in genus two, the
obstruction lives over the loci of maps with an extra involution, but here these loci have codimension two, and the
ramification appears only after a blow-up. Local densities predict that $\Lo2$ behaves like a divisor, met modulo $p$ with
probability $\asymp1/p$, while reduction into $\Lo1$ is rare, so that the obstruction comes from $\Lo2$; this leads to
\cref{conj:density}: the proportion of fine points of height at most $B$ tends to zero like $(\log B)^{-1/2}$.

The census of all \Nseven{} points of $\M_3(\Q)$ of weighted moduli height at most $7$ (\cref{sec:census,sec:statistics})
agrees with \cref{thm:local} on all $75\,546\,209$ obstructed pairs $(P,p)$ with $p\ge5$. Of its points, $68.0\%$ are fine;
the fine proportion falls geometrically in the number of primes at which the point reduces into $\Lo2$, and, by
reciprocity, $83\%$ of the coarse points are obstructed at $\infty$, $2$ or $3$. For fine points, the explicit descent
followed by minimisation and reduction (\cref{alg:small}) recovers the known minimal naive height in all but one of
$7\,409$ test cases.

Finally, in \cref{sec:orbits} we state the cubic case of the conjecture of \cite[Conj.~3.47]{silverman-2007-ads} on
integers in orbits through the moduli height (\cref{conj:integral-orbits}). Resultant minimality cannot be
dropped, even at a fixed moduli point (\cref{ex:orbit-scaling,prop:fixed-height-twists}); on the fine generic points of
bounded height the number of integers in wandering orbits of minimal maps is bounded (\cref{prop:generic-orbit-bound}),
by the results of \cite{bruin-molnar-2012,yap-2026-integral}.

\medskip
The paper is organised as follows. \cref{sec:background} recalls the invariants, the strata and the height.
\cref{sec:descent} gives the descent criterion and the explicit descent, and \cref{sec:local} the arithmetic of the
obstruction class. \cref{sec:census} describes the census and \cref{sec:statistics} its statistics, \cref{sec:models}
the minimal models and \cref{sec:orbits} integer points in orbits. For the programs, see \cite{2024-4-arxiv} and the
ancillary files of this paper.

\section{The moduli space of cubic rational maps}
\label{sec:background}

\subsection{Invariants}
Let $k$ be a field of characteristic zero and $V_n$ the space of binary forms of degree $n$ over $k$. For $f\in V_m$
and $g\in V_n$ we write $(f,g)_r$ for the $r$-th transvectant, normalised as in \cite[\S2]{2024-4-arxiv}, to which we
refer for the invariant theory used below.
A cubic map 
\[
\phi(x)=\dfrac{c_0x^3+c_1x^2+c_2x+c_3}{c_4x^3+c_5x^2+c_6x+c_7}
\]
 is the pair of binary cubics $(f_0,f_1)$,
and with \cite{west-cubic,2024-4-arxiv} we put
\[
I_\phi=yf_0-xf_1\in V_4,\qquad J_\phi=\frac{\partial f_0}{\partial x}+\frac{\partial f_1}{\partial y}\in V_2 .
\]
The roots of $I_\phi$ are the fixed points of $\phi$. The map $(f_0,f_1)\mapsto(I_\phi,J_\phi)$ is a linear bijection
from the pairs of cubics with nonzero resultant onto the pairs $(I,J)$ with nonzero modular resultant, with inverse
\[
(I,J)\mapsto\bigl(\tfrac14(xJ+\partial_yI),\tfrac14(yJ-\partial_xI)\bigr);
\]
 it identifies $\Rat_3$, the pairs
$(f_0,f_1)$ up to a common scalar, with the pairs $(I,J)$ up to a common scalar, and $\phi^\sigma$ corresponds to the
substitution action of $\sigma$ on the pair \cite{2024-4-arxiv}. The invariants are
\begin{equation}\label{eq:xi}
\begin{aligned}
\xi_0&=(J,J)_2, & \xi_1&=(I,I)_4, & \xi_2&=\bigl((I,I)_2,I\bigr)_4, \\
\xi_3&=(I,J^2)_4, & \xi_4&=\bigl((I,I)_2,J^2\bigr)_4, & \xi_5&=\bigl(J^3,(I,(I,I)_2)_1\bigr)_6,
\end{aligned}
\end{equation}
homogeneous of degrees $2,2,3,3,4,6$ in the pair, hence of the same weights under $\phi\mapsto\phi^\sigma$,
$\sigma\in\GL_2$. They generate the ring of invariants of $V_4\oplus V_2$, subject to the single relation
\cite[eq.~(15)]{west-cubic}, \cite{2024-4-arxiv}
\begin{equation}\label{eq:rel}
\xi_5^2=F(\xi_0,\dots,\xi_4)=\tfrac1{108}\xi_0^3\xi_1^3-\tfrac1{18}\xi_0^3\xi_2^2-\tfrac1{24}\xi_0\xi_1^2\xi_3^2-\tfrac16\xi_2\xi_3^3
+\tfrac12\xi_0\xi_2\xi_3\xi_4+\tfrac14\xi_1\xi_3^2\xi_4-\tfrac14\xi_0\xi_1\xi_4^2-\tfrac12\xi_4^3,
\end{equation}
and the resultant of $f_0,f_1$ is
\begin{equation}\label{eq:I6}
I_6=\tfrac18\xi_1^3+\tfrac1{384}\xi_0^2\xi_1-\tfrac34\xi_2^2+\tfrac3{16}\xi_1\xi_4+\tfrac1{256}\xi_3^2
-\tfrac3{16}\xi_2\xi_3-\tfrac1{64}\xi_0\xi_4+\tfrac18\xi_5 .
\end{equation}
Thus $\M_3\cong\Xb\setminus\{I_6=0\}$ with $\Xb\subset\P(2,2,3,3,4,6)$ the hypersurface \cref{eq:rel}, and two
maps are conjugate over $\overline k$ if and only if their invariants agree up to the weighted scaling
$\xi_i\mapsto\lambda^{w_i}\xi_i$, $\w=(2,2,3,3,4,6)$ \cite{2024-4-arxiv}. The relation \cref{eq:rel} is
quadratic in $\xi_5$, and $\Xb$ is a double cover of $\P(2,2,3,3,4)$ branched along $F=0$; for $\xi_5\ne0$ the two
points $(\xi_0,\dots,\xi_4,\pm\xi_5)$ are the two conjugacy classes with the same first five invariants, except at
the single point $[0:0:0:0:-2:2]$, where the scaling $\lambda=i$ identifies them.

\subsection{Rational points and the height}
\begin{lem}\label{lem:ratrep}
Every point of $\M_3(k)$ has a representative $(\xi_0,\dots,\xi_5)\in k^6$. If the weights of its nonzero
coordinates have greatest common divisor $g$, two $k$-rational representatives differ by $\xi_i\mapsto
\mu^{w_i/g}\xi_i$ with $\mu\in k^\times$.
\end{lem}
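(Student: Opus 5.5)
The plan is to use the standard description of rational points of a weighted projective space: a $k$-point is a Galois-stable $\overline k^\times$-orbit, and Hilbert 90 lets us pick a rational representative. We then read off the ambiguity directly from the weights.

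\emph{Existence.} Let $P\in\M_3(k)\subset\Xb(k)\subset\P(2,2,3,3,4,6)(k)$ and choose any representative $x=(x_0,\dots,x_5)\in\overline k^6\setminus\{0\}$. For each $\tau\in G_k=\mathrm{Gal}(\overline k/k)$ we have $\tau(x)=\lambda_\tau\cdot x$ for some $\lambda_\tau\in\overline k^\times$, where $\lambda\cdot x=(\lambda^{w_i}x_i)$.

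Let $S=\{i: x_i\ne0\}$ and $g=\gcd\{w_i: i\in S\}$. The scalar $\lambda_\tau$ is determined only up to the stabiliser of $x$, namely $\mu_g$. So it is cleaner to pass to $y=(x_i)_{i\in S}$ with weights $w_i/g$, whose gcd is $1$; now the scalar is unique.

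\begin{itemize}
\item Choose integers $a_i$ with $\sum_{i\in S} a_i w_i/g=1$ and put $u=\prod_{i\in S} x_i^{a_i}\in\overline k^\times$.
\item Set $\lambda=u^{-1}$ and $x'=\lambda\cdot y$. Then $\prod_{i\in S} (x'_i)^{a_i}=1$ is fixed by Galois.
\item Uniqueness of $\lambda_\tau$ for $x'$ forces $\lambda'_\tau=1$, since $\prod (x'_i)^{a_i}=\tau(\prod (x'_i)^{a_i})=\lambda'_\tau\prod (x'_i)^{a_i}$.
\end{itemize}

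Hence every coordinate of $x'$ lies in $k$, which gives the existence part. Padding with zeros recovers a representative in $k^6$, rescaled by $\lambda$ in the original weights, since $\lambda^{w_i}=(\lambda^{g})^{w_i/g}$.

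This avoids Hilbert 90 entirely. Alternatively, $\tau\mapsto\lambda_\tau$ is a 1-cocycle in $\overline k^\times$, and $H^1(G_k,\overline k^\times)=0$ gives the same conclusion.

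\emph{Uniqueness up to scaling.} Suppose $x,x'\in k^6$ represent $P$, so $x'_i=\lambda^{w_i}x_i$ with $\lambda\in\overline k^\times$. Both have the same support $S$. Put $\mu=\lambda^g$. Then $x'_i=\mu^{w_i/g}x_i$ for $i\in S$. Using the Bezout coefficients,
\[
\mu=\prod_{i\in S}(x'_i/x_i)^{a_i}\in k^\times .
\]
Conversely, any $\mu\in k^\times$ gives a $k$-rational representative, with $\lambda$ a $g$-th root of $\mu$.

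\emph{Where care is needed.} The only subtle point is the stabiliser $\mu_g$, for example at points like $[0:0:0:0:-2:2]$ where only even weights occur. This is exactly why the statement is phrased with $\mu^{w_i/g}$ rather than $\lambda^{w_i}$, $\lambda\in k^\times$. Working with the reduced weights $w_i/g$ on the support removes the difficulty, so I expect no real obstacle.
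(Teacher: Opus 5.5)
Your proof is correct and is essentially the paper's: your $u=\prod_{i\in S}x_i^{a_i}$ with $\sum a_iw_i/g=1$ is the paper's weight-$g$ monomial $t$, your $x'_i=x_i/u^{w_i/g}$ are its weight-zero quotients $\xi_i/t^{w_i/g}$, and your uniqueness argument is the same ratio computation. One small slip, which is harmless since $\overline k$ is algebraically closed: in the original weights the padded $x'$ is obtained from $x$ by a $g$-th root of $\lambda$, not by $\lambda$ itself.
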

\begin{proof}
Let $\xi$ be any representative over $\overline k$ and $S$ its support. Choose integers $e_i$, $i\in S$, with
$\sum e_iw_i=g$ and put $t=\prod_{i\in S}\xi_i^{e_i}$, of weight $g$. Since $P$ is fixed by Galois, each
$\tau\in\operatorname{Gal}(\overline k/k)$ sends $\xi$ to $\lambda_\tau^{\w}\xi$ for some $\lambda_\tau\in\overline k^\times$,
hence $t$ to $\lambda_\tau^gt$, and the weight-zero quotients $\xi'_i=\xi_i/t^{w_i/g}$ are fixed by $\tau$; so
$\xi'$ is a $k$-rational representative. The
second statement is the same computation applied to the ratio of two representatives.
\end{proof}

For $k=\Q$ every point therefore has an integral representative which is primitive for the effective weights
$w_i/g$ on its support, unique up to the sign $\mu=-1$; we fix the sign by asking that the first nonzero coordinate of odd
effective weight be positive, and call the result the \emph{canonical representative}. For points of trivial
support gcd, which include all points of the generic stratum, this means: no prime $p$ with $p^2\mid\xi_0,\xi_1$,
$p^3\mid\xi_2,\xi_3$, $p^4\mid\xi_4$, $p^6\mid\xi_5$, and the first nonzero of $\xi_2,\xi_3$ positive.

\begin{defn}\label{def:height}
The \emph{weighted moduli height} of $P\in\M_3(\Q)$ is $H(P)=\max_i|\xi_i|^{1/w_i}$, computed on the
canonical representative.
\end{defn}

Thus $H(P)\le B$ if and only if the canonical representative lies in the box $|\xi_0|,|\xi_1|\le B^2$,
$|\xi_2|,|\xi_3|\le B^3$, $|\xi_4|\le B^4$, $|\xi_5|\le B^6$, and there are finitely many such points. It is the
height used in \cite{2024-03,2026-stats} for $\M_2$; it is at least the product-formula height
$\prod_v\max_i|\xi_i|_v^{1/w_i}$ of \cite{2019-1,2022-1}, with equality when the coordinates of the canonical
representative are coprime. The box contains 
\[
(2B^2+1)^2(2B^3+1)^2(2B^4+1)\approx32B^{14}
\]
 tuples
$(\xi_0,\dots,\xi_4)$, and for each of them the relation leaves at most two values of $\xi_5$; for a tuple of
size $B$ the right side of \cref{eq:rel} is of size $B^{12}$ and is a square with probability about $B^{-6}$, so
one expects $N(B)=\#\{P\in\M_3(\Q):H(P)\le B\}$ to be of order $B^{8}$, up to a logarithmic factor that we
discuss in \cref{sec:counting}.

\subsection{Automorphism strata}
The automorphism group of a cubic map is one of $1$, $C_2$ (in two ways), $C_3$, $V_4$ (in two ways), $A_4$ and
the dihedral group $D_4$ of order $8$ \cite{west-cubic,2024-4-arxiv}. \cref{tab:strata} lists the strata in the
notation of \cite{2024-4-arxiv}, with the names of \cite{west-cubic} in parentheses. We write $\Lo1,\dots,\Lo7$ for the strata minus the more special
ones, as in \cite{2024-4-arxiv}, and $U$ for the generic stratum.

\begin{table}[ht]
\centering
\caption{Automorphism strata of $\M_3$ \cite{west-cubic,2024-4-arxiv}; $\mm=\frac13\xi_0^2\xi_1+\xi_0\xi_4-\xi_3^2$.
The last two columns are the counts in the census $H\le7$.}
\label{tab:strata}
\begin{tabular}{llclrr}
\toprule
stratum & group & dim & equations & points & fine \\
\midrule
$U$ & $1$ & $4$ & --- & $214\,073\,758$ & $145\,448\,181$\\
$\Lo1$ & $C_2$ ($C_2^{(2)}$) & $2$ & $\xi_2=\xi_3=0$ & $135\,917$ & $135\,917$\\
$\Lo2$ & $C_2$ ($C_2^{(1)}$) & $2$ & $\xi_5=\mm=0$ & $16\,162$ & $3\,544$\\
$\Lo3$ & $C_3$ & $1$ & $\xi_0=\xi_1=\xi_4=0$ & $327$ & $327$\\
$\Lo4$ & $V_4$ ($D_4^{(1)}$) & $1$ & $\xi_2=\xi_3=\xi_5=0,\ \xi_0\xi_1+3\xi_4=0$ & $1\,641$ & $1\,641$\\
$\Lo5$ & $V_4$ ($D_4^{(2)}$) & $1$ & $\xi_0=\xi_3=\xi_4=\xi_5=0$ & $32\,481$ & $32\,481$\\
$\Lo6$ & $A_4$ & $0$ & $[0:0:1:0:0:0]$ & $1$ & $1$\\
$\Lo7$ & $D_4$ ($D_8$) & $0$ & $[0:1:0:0:0:0]$ & $1$ & $1$\\
\bottomrule
\end{tabular}
\end{table}

The stratum $\Lo1$ is the locus where the weighted projective space has the stabiliser $\mu_2$ (the odd-weight
coordinates vanish), and $\Lo3$ the locus with stabiliser $\mu_3$; $\Lo2$, on the other hand, lies on the branch
divisor $\xi_5=0$ of $\Xb$. On $\Lo2$ the invariants are parametrised by the normal form $J=s\,xy$,
$I=tx^4+ux^2y^2+vy^4$:
\begin{equation}\label{eq:L2par}
(\xi_0,\dots,\xi_5)=\Bigl(-\tfrac{\sigma}2,\ \tfrac{12w+u^2}6,\ \tfrac{u(36w-u^2)}{36},\ \tfrac{\sigma u}6,\ \tfrac{\sigma(12w-u^2)}{36},\ 0\Bigr),
\qquad \sigma=s^2,\ w=tv,
\end{equation}
a birational map from the weighted plane $\P(2,1,2)$ with coordinates $(\sigma,u,w)$ onto $\Lo2$.

\section{The descent obstruction}
\label{sec:descent}

On $V_2$ the second transvectant is a symmetric bilinear form,
\[
\langle u,v\rangle=(u,v)_2,\qquad \langle u,u\rangle=\tfrac12(4ac-b^2)=-\tfrac12\operatorname{disc}(u)\quad(u=ax^2+bxy+cy^2),
\]
nondegenerate of discriminant $\tfrac12$, and isotropic ($\langle x^2,x^2\rangle=0$). The substitution action of
$\SL_2$ preserves it and induces the isomorphism $\PGL_2\cong\SO(V_2,\langle\,,\rangle)$; the Jacobian
$(u,v)_1$ is $\SO$-equivariant, orthogonal to $u$ and $v$, and changes sign under the improper isometries.
For a pair $(I,J)\in V_4\oplus V_2$ consider the covariant quadratics
\begin{equation}\label{eq:u}
u_1=J,\qquad u_2=(I,J)_2,\qquad u_3=(u_1,u_2)_1,\qquad v_3=\bigl((I,I)_2,J\bigr)_2,
\end{equation}
of degrees $1,2,3,3$ in the pair. Their Gram matrices are invariants, hence polynomials in the $\xi_i$.

\begin{lem}\label{lem:gram}
Put $\mm=\tfrac13\xi_0^2\xi_1+\xi_0\xi_4-\xi_3^2$. Then
\[
\widetilde G:=\bigl(\langle u_i,u_j\rangle\bigr)_{i,j\le3}=\begin{pmatrix}\xi_0&\xi_3&0\\ \xi_3&\tfrac13\xi_0\xi_1+\xi_4&0\\ 0&0&\tfrac12\mm\end{pmatrix},
\]
and, writing $G$ for the Gram matrix of $(u_1,u_2,v_3)$,
\[
G=\begin{pmatrix}\xi_0&\xi_3&\xi_4\\ \xi_3&\tfrac13\xi_0\xi_1+\xi_4&\tfrac13\xi_0\xi_2+\tfrac16\xi_1\xi_3\\ \xi_4&\tfrac13\xi_0\xi_2+\tfrac16\xi_1\xi_3&\tfrac1{18}\xi_0\xi_1^2-\tfrac16\xi_1\xi_4+\tfrac13\xi_2\xi_3\end{pmatrix}.
\]
Moreover $\det G=2F(\xi_0,\dots,\xi_4)$, so that $\det G=2\xi_5^2$ on $\Xb$, and the determinant of the coefficient
matrix of $(u_1,u_2,v_3)$ is $2\xi_5$. Finally, with $T_{kl}=(I,u_ku_l)_4$,
\begin{equation}\label{eq:T}
\begin{gathered}
T_{11}=\xi_3,\quad T_{12}=\tfrac13\xi_0\xi_1+\xi_4,\quad T_{13}=0,\quad T_{22}=\tfrac13\xi_0\xi_2+\tfrac12\xi_1\xi_3,\\
T_{23}=\xi_5,\quad T_{33}=-\tfrac16\xi_0^2\xi_2-\tfrac1{12}\xi_0\xi_1\xi_3+\tfrac12\xi_3\xi_4 .
\end{gathered}
\end{equation}
\end{lem}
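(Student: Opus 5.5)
Every entry in \cref{lem:gram} is an $\SL_2$-invariant of $V_4\oplus V_2$: it is built from $I$, $J$ by transvectants, products and the form $\langle\,,\rangle=(\cdot,\cdot)_2$, and it is homogeneous of a fixed bidegree in $(I,J)$. The plan is therefore to compute each entry as an invariant of known degree, write it as a polynomial in $\xi_0,\dots,\xi_5$ of the matching weight with unknown coefficients, and determine the coefficients by evaluation. Since the $\xi_i$ generate the invariant ring subject only to \cref{eq:rel}, the ansatz space for each weight $\le 6$ is small and explicit. For example, $\langle u_2,u_2\rangle$ has degree $2$ in $I$ and $2$ in $J$, so it lies in the span of $\xi_0\xi_1$ and $\xi_4$; the entry $T_{23}$ of weight $6$ must be checked against $\xi_5$ and the weight-$6$ monomials in $\xi_0,\dots,\xi_4$. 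Keeping track of the bidegree in $(I,J)$, and not only the total weight, cuts the ansatz further.

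To fix the coefficients, I would reduce to a normal form. Using $\SL_2$ over $\overline k$, move a generic $J$ to $s\,xy$ (or, for a cheaper isotropic check, to $x^2$) and keep $I=\sum a_ix^{4-i}y^i$ general. Both sides are then polynomials in $s,a_0,\dots,a_4$. This slice is dense up to the group action, so an identity between invariants that holds on the slice holds everywhere. In practice I would do this symbolically, exactly as in \cite{2024-4-arxiv}: expand the transvectants in the normalisation of \cite[\S2]{2024-4-arxiv} and compare coefficients.

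Some entries have conceptual shortcuts:
\begin{itemize}
\item $\langle u_3,u_i\rangle=0$ for $i=1,2$, and $T_{13}=0$, because the Jacobian $(u_1,u_2)_1$ is orthogonal to $u_1$ and $u_2$.
\item $\langle u_3,u_3\rangle$ is, up to the discriminant $\tfrac12$ of the form, the Gram determinant $\xi_0(\tfrac13\xi_0\xi_1+\xi_4)-\xi_3^2=\mm$. This gives the entry $\tfrac12\mm$, by the standard identity for cross products in a $3$-dimensional quadratic space.
\item $\det G$ equals $\tfrac12$ times the square of the coefficient determinant of $(u_1,u_2,v_3)$, because a Gram matrix is $C^{t}QC$ with $\det Q$ the discriminant of $\langle\,,\rangle$.
\end{itemize}
So $\det G=2F$ and $\det C=2\xi_5$ are consistent with each other. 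It suffices to show that $\det C$, an invariant of weight $6$ that changes sign under improper isometries, equals $2\xi_5$. Then $\det G=\tfrac12(2\xi_5)^2=2\xi_5^2=2F$ on $\Xb$, and, since $\det G$ and $F$ involve only $\xi_0,\dots,\xi_4$, this holds as a polynomial identity. The skew invariant $\det C$ must be a multiple of the unique skew invariant $\xi_5$ in weight $6$, so only one constant needs to be computed, and one evaluation fixes it.

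The main obstacle is bookkeeping rather than ideas. The difficult entries are those of weight $6$: $\langle v_3,v_3\rangle$, $\langle u_2,v_3\rangle$ and $T_{33}$. For these the ansatz has several monomials, and the normalisations of the transvectants must match \cite{2024-4-arxiv} exactly, since otherwise the constants come out off by rational factors. I would handle them by evaluating on enough random rational pairs $(I,J)$ to overdetermine the linear system for the coefficients, and then confirm by one full symbolic expansion on the normal-form slice.
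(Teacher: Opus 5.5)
Your plan is correct and is essentially the paper's own proof. Each entry is an invariant of known weight, hence a polynomial in $\xi_0,\dots,\xi_5$ whose coefficients are fixed by evaluation and then confirmed symbolically. The $u_3$-entries come from the orthogonality of the Jacobian and from $\langle u_3,u_3\rangle=\tfrac12\det(\langle u_i,u_j\rangle)_{i,j\le2}$. Your derivation of $\det G=2F$ uses $G=C^{t}QC$, $\det Q=\tfrac12$, the oddness of $\det C$ in $J$ and \cref{eq:rel}, and is a slightly more structural form of the paper's remark that this identity is equivalent to the relation.

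Two small cautions. First, the isotropic slice $J=x^2$ only covers $\{\xi_0=0\}$, so it cannot detect monomials divisible by $\xi_0$; it is no substitute for the dense slice $J=s\,xy$ or random evaluations. Second, $T_{13}=0$ follows from orthogonality only through the identity $(I,ab)_4=((I,a)_2,b)_2$ for quadratics $a,b$, which gives $T_{1l}=\langle u_2,u_l\rangle$.
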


\begin{proof}
Each entry is an invariant of $V_4\oplus V_2$ of the indicated weight ($\langle u_i,u_j\rangle$ has weight
$\deg u_i+\deg u_j$, and $T_{kl}$ weight $1+k+l$), hence a polynomial in $\xi_0,\dots,\xi_5$ with rational
coefficients. We determined these polynomials by solving for the coefficients of all monomials of the given
weight on random evaluations, and then checked each identity symbolically on the generic pair
\[
(I,J)=(\sum a_ix^iy^{4-i},\sum b_jx^jy^{2-j}).
\]
 That $\langle u_1,u_3\rangle=\langle u_2,u_3\rangle=0$ is the orthogonality
of the Jacobian, and 
\[
\langle u_3,u_3\rangle=\tfrac12\det\bigl(\langle u_i,u_j\rangle\bigr)_{i,j\le2}
\]
 is the
classical identity for the Jacobian of two binary quadratics. The identity $\det G=2F$ is equivalent to the
relation \cref{eq:rel}, and was checked in the same way.
\end{proof}

We record the identity, valid for any symmetric $3\times3$ matrix,
\begin{equation}\label{eq:sylv}
G_{11}\det G=\mm\, n-h^2,\qquad h=G_{11}G_{23}-G_{12}G_{13},\quad n=G_{11}G_{33}-G_{13}^2,
\end{equation}
(indices from $1$), so that on $\Xb$
\begin{equation}\label{eq:sylv2}
2\xi_0\xi_5^2=\mm\,n-h^2 .
\end{equation}
In particular on $\{\mm=0\}$ we have $-2\xi_0\xi_5^2=h^2$, so that $h$ vanishes on $\{\xi_5=\mm=0\}$, and 
$\{\xi_5=\mm=0\}=\{\mm=h=0\}$ on $\Xb\cap\{\xi_0\ne0\}$ (on $\xi_0=0$ the second locus is larger: it contains the surface $\{\xi_0=\xi_3=0\}\cap\Xb$, for instance $[0:0:1:0:-2:2]$); the locus $\{\xi_5=\mm=0\}$ is $\Lo2$ of \cref{tab:strata} together with the more special strata in its closure \cite{west-cubic,2024-4-arxiv}. Geometrically, $\mm=0$ says that the plane spanned by $u_1,u_2$ is degenerate for $\langle\,,\rangle$, i.e.\ that $u_1$ and $u_2$ have a common root. 
We write $Z=\{\operatorname{rank}G\le1\}\subset\Xb$.

\begin{lem}\label{lem:Z}
$Z=\{\xi_5=\mm=0\}\cap\Xb$ is irreducible of dimension two; it is the closure of $\Lo2$, and contains $\Lo4$, $\Lo5$, $\Lo6$ and $\Lo7$.
\end{lem}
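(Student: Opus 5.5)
We prove the two inclusions between $Z=\{\operatorname{rank}G\le1\}$ and $\{\xi_5=\mm=0\}\cap\Xb$, then get irreducibility and the dimension from the parametrisation \cref{eq:L2par}.

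\emph{Rank at most one implies $\xi_5=\mm=0$.} If $\operatorname{rank}G\le1$, then every $2\times2$ minor of $G$ vanishes. In particular $\mm=G_{11}G_{22}-G_{12}^2=0$, and $\det G=2\xi_5^2=0$ by \cref{lem:gram}.

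\emph{$\xi_5=\mm=0$ implies rank at most one.} Here the argument splits according to $\xi_0$.
\begin{itemize}
\item If $\xi_0\ne0$, the discussion after \cref{eq:sylv2} gives $h=0$. The remaining minors involving the third row are controlled by \cref{eq:sylv}: with $\det G=0$, $\mm=0$ and $h=0$, the first two rows of $G$ are proportional. Then $G_{11}\ne0$ and the Schur complement identity reduce $G$ to $\begin{pmatrix}G_{11}&*\\ *&S\end{pmatrix}$ with $S$ a $2\times2$ block. We have $\det S=\mm/G_{11}\cdot(\ldots)$, and we need $S=0$. The entry $S_{22}=n/G_{11}$ requires showing $n=0$. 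For this I would use the coefficient-matrix statement of \cref{lem:gram}. The condition $\xi_5=0$ means $u_1,u_2,v_3$ are linearly dependent, and $\mm=0$ means $u_1,u_2$ span a degenerate plane. I expect that plane to be spanned by $u_1$ and an isotropic vector, with $v_3$ lying in it. This is checked in the normal form: $J=sxy$, $I=tx^4+ux^2y^2+vy^4$ gives $u_2\propto xy$, so the span is one-dimensional and $G$ has rank one directly.
\item If $\xi_0=0$, the set $\{\mm=h=0\}$ is larger than $\{\xi_5=\mm=0\}$, so $h$ no longer does the work. Here $\mm=-\xi_3^2$ forces $\xi_3=0$. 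Substituting $\xi_0=\xi_3=0$ into $G$ and into \cref{eq:rel} with $\xi_5=0$ gives $F=-\tfrac12\xi_4^3=0$, so $\xi_4=0$. Then $G=\operatorname{diag}(0,0,0)$ up to the $(3,3)$ entry $-\tfrac16\xi_1\xi_4=0$, hence $\operatorname{rank}G=0$.
\end{itemize}

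The cleanest route to the inclusion in general is a classification argument rather than algebra on minors. The locus $\{\xi_5=\mm=0\}$ is, by \cite{west-cubic,2024-4-arxiv} as recalled before the statement, $\Lo2$ together with the strata in its closure. For each of these strata we verify $\operatorname{rank}G\le1$ on an explicit normal form:
\begin{itemize}
\item the $\Lo2$ family above, where $u_1,u_2,v_3$ are all multiples of $xy$;
\item $\Lo4,\Lo5,\Lo6,\Lo7$, by substituting the equations of \cref{tab:strata} into $G$.
\end{itemize}
For example, on $\Lo5$ we have $\xi_0=\xi_3=\xi_4=0$, leaving only $G_{23}=\tfrac16\xi_1\cdot0+0=0$ and $G_{33}=0$, so $G=0$. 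On $\Lo6$ and $\Lo7$, $G$ also vanishes. On $\Lo4$ the condition $\xi_4=-\xi_0\xi_1/3$ makes $G$ have first row $(\xi_0,0,-\xi_0\xi_1/3)$ and second row $0$, and one checks $\operatorname{rank}G=1$. The same substitutions show that $\Lo4,\dots,\Lo7$ satisfy $\xi_5=\mm=0$, which gives the containment claim.

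\emph{Irreducibility and dimension.} The map \cref{eq:L2par} from the irreducible surface $\P(2,1,2)$ is birational onto $\Lo2$, so the closure of $\Lo2$ is irreducible of dimension two. It remains to show that $\{\xi_5=\mm=0\}\cap\Xb$ has no further components. This is the main obstacle. I would handle it by showing that $\{\xi_5=\mm=0\}$ has dimension at most two everywhere and is contained in the closure of the image:
\begin{itemize}
\item On $\xi_0\ne0$, the ideal $(\mm,\xi_5,F)$ lets us eliminate $\xi_4=(\xi_3^2-\tfrac13\xi_0^2\xi_1)/\xi_0$. Then $F$ restricted to this locus should factor as a square times a single irreducible factor, which is exactly the implicit equation of the image of \cref{eq:L2par}. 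I would verify this factorisation by computer, as in \cref{lem:gram}.
\item On $\xi_0=0$, the computation above forces $\xi_3=\xi_4=0$. This leaves the curve $\xi_0=\xi_3=\xi_4=\xi_5=0$, which is the closure of $\Lo5$ and lies in the image closure (take $\sigma\to0$ in \cref{eq:L2par}).
\end{itemize}

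Hence $Z=\{\xi_5=\mm=0\}\cap\Xb=\overline{\Lo2}$, which is irreducible of dimension two and contains $\Lo4,\dots,\Lo7$.
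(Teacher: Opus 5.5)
Your outline is the paper's proof: the minor argument for $\operatorname{rank}G\le1\Rightarrow\xi_5=\mm=0$; on $\xi_0\ne0$, eliminating $\xi_4$ through $\mm=0$ and reducing $\xi_5=0$ to $h=0$; density of the image of \cref{eq:L2par}; and on $\xi_0=0$ the curve $\{\xi_0=\xi_3=\xi_4=\xi_5=0\}$, on which $G=0$.

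Two remarks on the irreducibility step. First, the computer factorisation is unnecessary, and the shape you predict is slightly off. \cref{eq:sylv} together with $\det G=2F$ reads $2\xi_0F=\mm n-h^2$, so on $\{\mm=0\}$ the restriction of $F$ is $-h^2/(2\xi_0)$. This is a unit times $h^2$, with no further factor; an extra irreducible factor would give a second component. After eliminating $\xi_4$ one has $h=\tfrac13\xi_0^2\xi_2+\tfrac12\xi_0\xi_1\xi_3-\xi_3^3/\xi_0$, which is linear in $\xi_2$. So $\{\mm=h=0,\ \xi_0\ne0\}$ is a graph over $(\xi_0,\xi_1,\xi_3)$, an irreducible surface. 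Second, your $\sigma\to0$ argument for the curve is valid: \cref{eq:L2par} has no base point on $\sigma=0$, and $\xi_2^2/\xi_1^3$ is nonconstant there. It replaces the paper's dimension count, which uses that every component of a locus cut out of the fourfold $\Xb$ by two equations has dimension at least two. Either works.

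The step that is not complete as written is the inclusion of $\{\xi_5=\mm=0,\ \xi_0\ne0\}\cap\Xb$ in $\{\operatorname{rank}G\le1\}$. You reduce it correctly to $n=0$. However, the normal-form computation proves this only on the image of \cref{eq:L2par}. The classification route rests on the cited description of the strata, which concerns $\M_3$ only, whereas $Z\subset\Xb$ also meets the boundary $I_6=0$ (for instance at $w=0$ in \cref{eq:L2par}).

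The missing link is the paper's. The two-dimensional image of \cref{eq:L2par} is dense in the irreducible surface $\{\mm=h=0,\ \xi_0\ne0\}$, and $\operatorname{rank}G\le1$ is a closed condition. So it holds on that surface and on its closure. More directly, substituting $\xi_4=\xi_3^2/\xi_0-\tfrac13\xi_0\xi_1$ and $\xi_2=3\xi_3^3/\xi_0^3-\tfrac32\xi_1\xi_3/\xi_0$ into $n=\xi_0G_{33}-\xi_4^2$ gives $0$ identically. With either sentence placed after your irreducibility step, the argument is complete.
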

\begin{proof}
Since $\mm$ is a principal minor of $G$ and $\det G=2\xi_5^2$, $\operatorname{rank}G\le1$ implies $\xi_5=\mm=0$; and on $\{\xi_5=\mm=0\}$ the identity \cref{eq:sylv2} gives $h=0$. Where $\xi_0\ne0$, the equations $\mm=0$ and $h=0$ are solved by $\xi_4=\xi_3^2/\xi_0-\tfrac13\xi_0\xi_1$ and $\xi_2=3\xi_3^3/\xi_0^3-3\xi_1\xi_3/(2\xi_0)$, so $\{\mm=h=0,\ \xi_0\ne0\}$ is the graph of a map of the coordinates $(\xi_0,\xi_1,\xi_3)$, an irreducible surface $S^\circ$; by \cref{eq:sylv} it lies on $\{F=0\}$, hence on $\Xb$ with $\xi_5=0$. The image of \cref{eq:L2par} is a two-dimensional subset of $S^\circ$, hence dense in it, and $G$ has rank one on it; since $\operatorname{rank}G\le1$ is a closed condition, it holds on all of $S^\circ$. Where $\xi_0=0$, $\mm=0$ forces $\xi_3=0$, and then $F=-\tfrac12\xi_4^3$ forces $\xi_4=0$: this is the locus $\{\xi_0=\xi_3=\xi_4=\xi_5=0\}$ of $\Lo5$, $\Lo6$, $\Lo7$, on which $G=0$. So $\{\xi_5=\mm=0\}\cap\Xb$ is the union of the closure $S$ of $S^\circ$ and of this curve; since it is cut out of the fourfold $\Xb$ by two equations, each of its components has dimension at least two, so the curve lies in $S$, and $Z=\{\xi_5=\mm=0\}\cap\Xb=S$ is irreducible. Finally $\Lo2$ is the image of \cref{eq:L2par}, which is dense in $S$.
\end{proof}


\begin{thm}\label{thm:criterion}
Let $k$ be a field of characteristic zero, $P\in U(k)$ a point with trivial automorphism group, and
$(\xi_0,\dots,\xi_5)\in k^6$ a representative (\cref{lem:ratrep}).
\begin{enumerate}
\item If $\mm\neq0$ and $\xi_0\ne0$, then $P$ is fine if and only if the quaternion algebra $(-\mm,-2\xi_0)_k$ is
split, i.e.\ if and only if the conic $\xi_0X^2+\frac{\mm}{\xi_0}Y^2+\frac{\mm}2Z^2=0$ has a $k$-point.
\item If $\mm=0$ or $\xi_0=0$, then $P$ is fine.
\end{enumerate}
The class $(-\mm,-2\xi_0)$ does not depend on the representative, since $\xi_0$ and $\mm$ have even weights.
\end{thm}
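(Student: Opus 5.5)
The plan is to realise the conic $C=E\times^{\PGL_2}\P^1$ as the conic of isotropic lines of a ternary quadratic space over $k$. Then I would diagonalise that space using the Gram matrix $\widetilde G$ of \cref{lem:gram}.

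\emph{Setting up the twisted space.} Fix the $k$-rational representative $(\xi_i)$ and a pair $(I,J)$ over $\overline k$ realising it. For $\tau\in\operatorname{Gal}(\overline k/k)$ the pair ${}^\tau(I,J)$ has the same invariants. So there is $\sigma_\tau\in\GL_2(\overline k)$ with ${}^\tau(I,J)=\sigma_\tau\cdot(I,J)$ exactly, after absorbing the weighted scalar $\lambda_\tau$ into $\sigma_\tau$. Because $\Aut(\phi)$ is trivial, $\sigma_\tau$ is unique up to the scalars fixing the pair, and its image in $\PGL_2=\SO(V_2,\langle\,,\rangle)$ is a well-defined cocycle; this cocycle is the class of $E$. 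Put
\[
W=\{v\in V_2\otimes\overline k:\ {}^\tau v=\sigma_\tau v\ \text{for all}\ \tau\}.
\]
This is a $3$-dimensional $k$-form of $(V_2,\langle\,,\rangle)$ with $\langle\,,\rangle$ $k$-valued on it. The isotropic vectors of $V_2$ are the squares $\ell^2$ of linear forms, so the conic of isotropic lines of $W$ is the twist of $\P^1$ by the cocycle, namely $C$. Hence $P$ is fine if and only if $W$ is isotropic. The covariants $u_1,u_2,u_3$ of \cref{eq:u} are built from $(I,J)$ by $\SL_2$-equivariant operations, the Jacobian $u_3$ being $\SO$-equivariant. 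So they satisfy ${}^\tau u_i=\sigma_\tau u_i$, that is, they lie in $W$, once signs and scalars are checked against the chosen normalisation.

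\emph{Case (1).} If $\xi_0\mm\ne0$ then $\det\widetilde G=\tfrac12\mm\,(\xi_0\mm/\xi_0)\cdot\xi_0^{-1}\cdot\xi_0\ne0$ (namely $\tfrac12\mm^2\neq0$). Thus $u_1,u_2,u_3$ are a $k$-basis of $W$. Replacing $u_2$ by $u_2-(\xi_3/\xi_0)u_1$, the Gram matrix $\widetilde G$ becomes $\operatorname{diag}(\xi_0,\mm/\xi_0,\mm/2)$. This is exactly the conic in the statement. The ternary form $\langle a,b,c\rangle$ is isotropic if and only if $(-b/a,-c/a)$ splits. Here that algebra is $(-\mm,-\mm/(2\xi_0))$, and using $(-\mm,\mm)=1$ and bilinearity it equals $(-\mm,-2\xi_0)$. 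Independence of the representative is immediate, since $\xi_0$ and $\mm$ change by squares.

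\emph{Case (2).} Here I would exhibit a nonzero rational isotropic vector of $W$.
\begin{itemize}
\item If $\xi_0=0$ and $J\ne0$, then $u_1=J$ is such a vector.
\item If $\mm=0\neq\xi_0$, then $\operatorname{span}(u_1,u_2)$ is a degenerate $k$-plane of $W$ whose radical is a $k$-rational isotropic line. Equivalently, $u_3$ is isotropic because $\langle u_3,u_3\rangle=\tfrac12\mm$, and it serves whenever $u_3\ne0$.
\end{itemize}
The remaining subcases need another argument: $J=0$, $u_1$ and $u_2$ proportional, or $\xi_0=0$ together with other vanishing. For these I would bring in $v_3$ and the full Gram matrix $G$. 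Since the coefficient determinant of $(u_1,u_2,v_3)$ is $2\xi_5$, the triple is a basis whenever $\xi_5\ne0$. In that case I would look for an isotropic rational combination, using \cref{eq:sylv2} with $\mm=0$ to get $h^2=-2\xi_0\xi_5^2$; when $\xi_0=0$ this forces $h=0$, which I expect produces an explicit isotropic vector. The subcases with $\xi_5=0$ and $\mm=0$ lie in $Z$ by \cref{lem:Z}, hence outside $U$. The leftover ones with $\xi_5=0$ and $\mm\ne0$ must be handled by showing that some radical or isotropic rational line still exists.

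I expect the main obstacles to be twofold. The first is making the cocycle normalisation rigorous, i.e. that the $u_i$ genuinely lie in $W$, including the sign behaviour of the Jacobian. The second is exhausting the degenerate subcases of (2) where the covariants fail to span.
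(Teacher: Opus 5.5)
Your route is valid and genuinely different from the paper's. The paper never forms a twisted space. Assuming $(k^3,\widetilde G)\cong(V_2,\langle\,,\rangle)$ over $k$, it uses Witt's theorem to extend $u_1\mapsto u_1'$, $u_2\mapsto u_2'$ (with $u_i'$ rational) to a \emph{proper} isometry, i.e.\ an $\SL_2$-substitution. It then recovers $I$ from the linear system $(I,u_ku_l)_4=T_{kl}(\xi)$. When $\mm=0$ it reruns this with the basis $(u_1,u_2,v_3)$ and Gram matrix $G$, replacing $g$ by $-g$ if needed.

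You instead identify the obstruction with the isometry class of the Galois twist $W$ and note that the covariants give a $k$-basis of $W$. Both directions of ``fine iff $\widetilde G$ is isotropic'' then follow at once, with no need for the formulas $T_{kl}$ and no improper isometries. What the paper's version buys is constructivity: its proof is literally the algorithm of \cref{prop:explicit}, whereas yours does not by itself turn a point of the conic into a map over $k$.

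Your normalisation worry dissolves. Write ${}^\tau(I,J)=c_\tau\,g_\tau(I,J)$ with $g_\tau\in\SL_2(\overline k)$. Since $\SL_2$ preserves the $\xi_i$ and the representative is $k$-rational, $c_\tau^{w_i}=1$ on the support of $\xi$. On $U$ that support contains a weight $3$ and a weight $2$ or $4$, so $c_\tau=1$. Then ${}^\tau u_i=g_\tau u_i$ holds exactly for every $\SL_2$-covariant with rational coefficients, the Jacobian $u_3$ included, and $\pm g_\tau$ act identically on $V_2$.

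As written, however, case (2) is unfinished. The subcases you leave open are all either empty on $U$ or already covered by your two bullets.

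\begin{itemize}
\item If $\xi_0=0$ and $\mm\ne0$, then $\det\widetilde G=\tfrac12\mm^2\ne0$, so $u_1,u_2,u_3$ is a basis of $W$. In particular $J=u_1\ne0$, and $u_1$ is isotropic. Your ``leftover'' case $\xi_5=0\ne\mm$ is of this kind, since in case (2) $\mm\ne0$ forces $\xi_0=0$.
\item If $\mm=0$, whatever $\xi_0$ is, then $\xi_5\ne0$, because $\xi_5=\mm=0$ would put $P$ in $Z$, outside $U$ (as you note). The coefficient determinant of $(u_1,u_2,v_3)$ is $2\xi_5$ (\cref{lem:gram}), so $u_1$ and $u_2$ are independent. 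Hence $u_3=(u_1,u_2)_1\ne0$, and $\langle u_3,u_3\rangle=\tfrac12\mm=0$ makes $u_3$ a nonzero isotropic vector of $W$.
\end{itemize}

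So $J=0$ and $u_1\parallel u_2$ never occur, and the detour through $h$ and \cref{eq:sylv2} is unnecessary. With this fix your treatment of (2) is shorter than the paper's, since in $W$ it suffices to exhibit a single isotropic vector.
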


\begin{proof}
Points of $U$ have a nonzero coordinate of weight $3$ (otherwise $P\in\Lo1$) and one of weight $2$ or $4$
(otherwise $P\in\Lo3$), so by \cref{lem:ratrep} the $k$-rational representatives of $P$ are the
$(\lambda^{w_i}\xi_i)$ with $\lambda\in k^\times$, and replacing $(I,J)$ by $(\mu I,\mu J)$ does not change the map
while multiplying the invariants by $\mu^{\w}$. Hence $P$ is fine if and only if there is a pair
$(I,J)\in(V_4\oplus V_2)(k)$, defining a map, whose invariants are exactly $\xi$.

Suppose first $\mm\ne0$. Then $\det\widetilde G=\tfrac12\mm^2\ne0$, so $u_1,u_2,u_3$ is a basis of $V_2$ for every
pair with invariants $\xi$. If $(I,J)$ is defined over $k$, so are the $u_i$, and $(k^3,\widetilde G)\cong(V_2,\langle\,,\rangle)$
over $k$. Conversely assume such an isometry exists, and let $u'_1,u'_2\in V_2(k)$ be the images of the first two
basis vectors, so that $\langle u'_i,u'_j\rangle=\widetilde G_{ij}$ for $i,j\le2$; put $u'_3=(u'_1,u'_2)_1$. Let
$(I_0,J_0)$ be any pair over $\overline k$ with invariants $\xi$ and $u_i$ its covariants. By Witt's theorem the isometry $\operatorname{span}(u_1,u_2)\to\operatorname{span}(u'_1,u'_2)$, $u_i\mapsto u'_i$, extends to an isometry of $V_2$ in exactly two ways, which differ by the reflection in the orthogonal line; one of them, $g$, is proper. Then $g$ is the substitution by some $\sigma\in\SL_2(\overline k)$, it maps the Jacobian $u_3$ to $u'_3$, and $(I,J)=(I_0,J_0)^\sigma$ has invariants $\xi$, $J=u'_1$ and $(I,J)_2=u'_2$. So $J$ and $u_1,u_2,u_3$ are defined over $k$. Now $I$ is determined by the six numbers
$(I,u_ku_l)_4=T_{kl}(\xi)\in k$: the products $u_ku_l$ span $V_4$ and $(\,,\,)_4$ is a nondegenerate pairing on
$V_4$. The linear system $(I,u_ku_l)_4=T_{kl}(\xi)$ has coefficients in $k$ and a unique solution over
$\overline k$, which therefore lies in $V_4(k)$. So $(I,J)$ is defined over $k$, and $P$ is fine.

The forms $\widetilde G$ and $\langle\,,\rangle$ have the same discriminant ($\tfrac12\mm^2\equiv\tfrac12$ modulo
squares), and $\langle\,,\rangle$ is isotropic. Two ternary forms of the same discriminant, one isotropic, are
isometric if and only if the other is isotropic (both are then $\mathbb H\perp\langle-\mathrm{disc}\rangle$). Thus $P$
is fine if and only if $\widetilde G$ is isotropic over $k$. If $\xi_0=0$ the first basis vector is isotropic. If
$\xi_0\ne0$, $\widetilde G\cong\langle\xi_0,\mm/\xi_0,\mm/2\rangle$, which is isotropic if and only if
$\bigl(-\mm/\xi_0^2,\,-\mm/(2\xi_0)\bigr)=(-\mm,-2\xi_0\mm)=(-\mm,-2\xi_0)$ is split, using $(a,-a)=0$.

Suppose now $\mm=0$. Since $P\notin\Lo2$, \cref{eq:sylv2} gives $\xi_5\ne0$, so by \cref{lem:gram} the
covariants $u_1,u_2,v_3$ form a basis of $V_2$ with Gram matrix $G$. The argument above applies verbatim to this
basis (with $-g$ in place of $g$ when $\det g=-1$, which now replaces all three vectors by their negatives,
again $k$-rational), the pairings $(I,u_ku_l)_4$, $(I,u_kv_3)_4$, $(I,v_3^2)_4$ being invariants and therefore in
$k$. So $P$ is fine if and only if $G$ is isotropic. But the upper left $2\times2$ block of $G$ has determinant
$\mm=0$, so it has a nonzero $k$-rational kernel vector, which is isotropic for $G$.
\end{proof}

\begin{rem}\label{rem:west}
The triple $(u_1,u_2,u_3)$ is the triple $(f,(g,f)_2,((g,f)_2,f)_1)$ of \cite[Thm.~4.4]{west-cubic} (where $f$ is our
$J$ and $g$ our $I$), and the invariant $\tilde c$ there is $\tfrac12\mm$; the triple $(u_1,u_2,v_3)$ is that of
\cite[Thm.~4.3]{west-cubic}, whose invariant $c$ is a multiple of $\xi_5$. The content of \cref{thm:criterion} beyond
these theorems is that the conic is diagonal up to a $2\times2$ block, so that the obstruction is the explicit symbol
$(-\mm,-2\xi_0)$, and the proof shows that the descended map can be computed by linear algebra once a point on the
conic is known (\cref{prop:explicit}).
\end{rem}

For the strata with nontrivial automorphisms we rely on \cite[\S4.3, Prop.~4.5]{west-cubic}. On
$\Lo1$ (where the weighted projective space has stabiliser $\mu_2$) and on $\Lo3$ (stabiliser $\mu_3$) the
rational representatives are only determined up to $\mu^{w_i/2}$, respectively $\mu^{w_i/3}$, with $\mu\in k^\times$,
and this freedom is enough to make the conics of \cite{west-cubic} split: every point of $\Lo1$, $\Lo3$, and of $\Lo4\subset\Lo1$, is
fine. The same holds on $\Lo5$ and at the $D_4$-point $[0:1:0:0:0:0]$ \cite[Prop.~4.5]{west-cubic}. The
discussion of the $A_4$-point $[0:0:1:0:0:0]$ in \cite{west-cubic} only exhibits a non-trivial class in its obstruction set; it is
nevertheless fine: the map $-3z/(4z^3+1)$, with $J=0$ and the equianharmonic fixed-point quartic $I=x^4+xy^3$, has invariants $(0,0,24,0,0,0)$. On $\Lo2$,
$P$ has a model over $k$ if and only if the conic
$9\xi_0^3X^2+8\xi_0^2Y^2-24\xi_0\xi_3YZ+(72\xi_3^2-36\xi_0^2\xi_1)Z^2=0$ has a $k$-point \cite[Prop.~4.5]{west-cubic}. Completing the square and using
$\mm=0$, i.e.\ $3\xi_3^2=\xi_0^2\xi_1+3\xi_0\xi_4$, this conic is $\langle\xi_0,\,2,\,2\xi_0(3\xi_4-\xi_0\xi_1)\rangle$. We
obtain:

\begin{prop}\label{prop:L2}
A point $P\in\Lo2(k)$ is fine if and only if $\bigl(-2\xi_0,\,2(\xi_0\xi_1-3\xi_4)\bigr)_k$ is split. In the coordinates
\cref{eq:L2par} this is the symbol $(\sigma,w)$: $P$ is fine if and only if $w=tv$ is a norm from
$k(\sqrt\sigma)=k(s)$.
\end{prop}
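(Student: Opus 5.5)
The plan is to start from the conic of \cite[Prop.~4.5]{west-cubic} in the diagonal form $\langle\xi_0,\,2,\,2\xi_0(3\xi_4-\xi_0\xi_1)\rangle$ obtained just before the statement, apply the standard isotropy criterion for ternary forms, and then substitute the parametrisation \cref{eq:L2par}. I would first re-check the diagonalisation. Complete the square in $Y$ in $8\xi_0^2Y^2-24\xi_0\xi_3YZ$, which leaves a $Z^2$-coefficient $72\xi_3^2-36\xi_0^2\xi_1-18\xi_3^2$. Then use $3\xi_3^2=\xi_0^2\xi_1+3\xi_0\xi_4$ (that is, $\mm=0$) to rewrite this coefficient as $18\xi_0(3\xi_4-\xi_0\xi_1)$. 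Removing square factors $\xi_0^2$, $4\xi_0^2$ and $9$ from the three coefficients then gives the stated form. This is routine.

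Next, I would check nondegeneracy on $\Lo2$. The quantities $\xi_0$ and $\xi_0\xi_1-3\xi_4$ can be evaluated on \cref{eq:L2par}. One gets $-2\xi_0=\sigma$, and
\[
\xi_0\xi_1-3\xi_4=-\tfrac{\sigma(12w+u^2)}{12}-\tfrac{\sigma(12w-u^2)}{12}=-2\sigma w .
\]
On $\Lo2$ proper we have $\sigma\ne0$: if $s=0$ then $J=0$, which puts the point in $\Lo5\cup\Lo6\cup\Lo7$ by \cref{lem:Z}. We also have $w=tv\ne0$: otherwise $I$ and $J=sxy$ share the root $0$ or $\infty$, and the modular resultant vanishes, so there is no map. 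Hence the form is nondegenerate. For a nondegenerate $\langle a,b,c\rangle$, isotropy is equivalent to isotropy of $\langle1,b/a,c/a\rangle$, i.e.\ to the splitting of $(-b/a,-c/a)=(-ab,-ac)$. With $a=\xi_0$, $b=2$, $c=2\xi_0(3\xi_4-\xi_0\xi_1)$ this gives $-ab=-2\xi_0$ and $-ac\equiv2(\xi_0\xi_1-3\xi_4)$ modulo squares, which is the first claim. As in \cref{thm:criterion}, both entries have even weight, so the class is independent of the representative.

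For the second claim I need $k$-rational values of $(\sigma,u,w)$ for a point of $\Lo2(k)$. The birational map can be inverted explicitly: $\sigma=-2\xi_0$, $u=6\xi_3/\sigma$, and $w$ is obtained from $\xi_1$ as $w=(6\xi_1-u^2)/12$. These are rational expressions in the rational representative and are well defined since $\xi_0\ne0$; a change of representative rescales them by the weights $(2,1,2)$. The symbol is then
\[
(\sigma,-4\sigma w)=(\sigma,-\sigma)+(\sigma,w)=(\sigma,w),
\]
using $(a,-a)=0$ and the fact that $4$ is a square. Finally, $(\sigma,w)$ splits if and only if $w$ is a norm from $k(\sqrt\sigma)$. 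Since $\sigma=s^2$, this field is $k(s)$ whenever $s$ is a generator; if $\sigma$ is a square in $k$ the algebra splits trivially, consistently with the norm statement.

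The main obstacle I expect is bookkeeping of the degenerate loci: justifying that $\xi_0\ne0$ and $w\ne0$ on all of $\Lo2$ (and not just generically), and that the inverse of \cref{eq:L2par} is defined on every $k$-point. The algebraic identities themselves are short.
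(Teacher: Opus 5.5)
Your proposal is correct and is the paper's own argument: diagonalise the conic of \cite[Prop.~4.5]{west-cubic} by completing the square and using $\mm=0$, read off the symbol $(-ab,-ac)=\bigl(-2\xi_0,\,2(\xi_0\xi_1-3\xi_4)\bigr)$, and substitute \cref{eq:L2par} to get $(\sigma,-4\sigma w)=(\sigma,w)$. One justification needs repair: a common root of $I$ and $J$ does not by itself make the resultant vanish (a simple common root is only a fixed point of multiplier $-3$), so for $w\ne0$ argue as the paper does, that for $v=0$ (resp.\ $t=0$) the quartic $I$ has a double root at $x=0$ (resp.\ $y=0$) and the cubics $f_0,f_1$ of the normal form share the factor $x$ (resp.\ $y$).
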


Here $w\ne0$ on $\Lo2\cap\M_3$: for $v=0$ the forms $f_0,f_1$ of the normal form have the common factor $x$, and for $t=0$ the common factor $y$.

Of the $16\,162$ points of $\Lo2$ of height at most $7$, $3\,544$ are fine; all $5\,908$ classes of $\Lo2$ in the database
of \cite{2024-4-arxiv} (of naive height at most $5$) satisfy the criterion. Points with automorphisms are rare
($186\,530$ of the $214\,260\,288$ points of height at most $7$), and we concentrate on $U$ from now on.

\begin{cor}\label{cor:consequences}
Let $P\in U(k)$.
\begin{enumerate}
\item $P$ has a model over $k(\sqrt{-2\xi_0})$ and over $k(\sqrt{-\mm})$. In particular $P$ is fine if $-2\xi_0$,
the discriminant of the divergence $J_\phi$ up to a square factor, is a square in $k$ (and then the divergence of
every model of $P$ over $k$ has $k$-rational roots), and if $\mm=0$, i.e.\ if $J_\phi$ and $(I_\phi,J_\phi)_2$ have a
common root.
\item If the two points $(\xi_0,\dots,\xi_4,\pm\xi_5)$ of $\Xb$ over the same point of $\P(2,2,3,3,4)$ both lie in
$\M_3$, they are both fine or both coarse.
\item For $k=\R$: $P\in U(\R)$ is not definable over $\R$ if and only if $\xi_0>0$ and $\mm>0$, i.e.\ if and only if
$\widetilde G$ (equivalently $G$, when $\xi_5\ne0$) is positive definite. These are exactly the pseudo-real points of
\cite{hidalgo-quispe} with trivial automorphism group.
\item A quadratic field $\Q(\sqrt d)$ is a field of definition of $P\in U(\Q)$ if and only if $d$ is not a square in
$\Q_v$ for every place $v$ at which $(-\mm,-2\xi_0)$ ramifies (there is no such place if $\xi_0\mm=0$).
\end{enumerate}
\end{cor}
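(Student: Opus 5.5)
The plan is to read off all four items from \cref{thm:criterion}, using standard facts on quaternion algebras: a quaternion algebra over $k$ is split by a quadratic extension $L$ if and only if $L$ embeds in it, and by Hasse--Minkowski for $k=\Q$. Throughout, the conic of $P$ is the Severi--Brauer variety of $\beta=(-\mm,-2\xi_0)$ when $\xi_0\mm\ne0$. Otherwise the class is trivial and every item holds vacuously or trivially.

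\emph{Item (1).} Assume $\xi_0\mm\neq0$. Apply \cref{thm:criterion} over $L=k(\sqrt{-2\xi_0})$ or $L=k(\sqrt{-\mm})$; this is allowed since $P\in U(L)$ and the criterion holds over any characteristic-zero field. In either field one entry of the symbol becomes a square, so $(-\mm,-2\xi_0)_L$ is split and $P$ is fine over $L$. When $L=k$ we get the stated special cases.
\begin{itemize}
\item[] For $-2\xi_0$: by the formula $\langle J,J\rangle=-\tfrac12\operatorname{disc}J$ and $\xi_0=\langle J,J\rangle$ from \cref{lem:gram}, $\operatorname{disc}J_\phi=-2\xi_0$ for every model. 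So when $-2\xi_0$ is a square, $J_\phi$ has $k$-rational roots for any $k$-model.
\item[] For $\mm=0$: this is part (2) of the theorem, and the geometric reading (a common root of $u_1,u_2$) was noted after \cref{eq:sylv2}.
\end{itemize}

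\emph{Item (2).} The symbol involves only $\xi_0,\xi_1,\xi_3,\xi_4$, which are unchanged under $\xi_5\mapsto-\xi_5$. The case distinction of \cref{thm:criterion} also depends only on these coordinates. The one subtlety is that both points must lie in $U$: if one lay on $\Lo2$, we would have $\xi_5=0$ and the two points would coincide. The point $[0:0:0:0:-2:2]$ has $\xi_0=0$, so both of its sheets are fine regardless.

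\emph{Item (3).} Over $\R$, the form $\widetilde G\cong\langle\xi_0,\mm/\xi_0,\mm/2\rangle$ is anisotropic exactly when it is definite. If $\xi_0<0$ it is indefinite. If $\xi_0>0$ it is definite iff $\mm>0$, and then it is positive definite. The degenerate cases $\xi_0\mm=0$ are fine by the theorem. When $\xi_5\ne0$, $G$ and $\widetilde G$ are Gram matrices of two bases of the same real quadratic space with $\mm\ne0$, so the signatures agree. For the identification with the pseudo-real points of \cite{hidalgo-quispe}, I would argue as follows.
\begin{itemize}
\item[] Complex conjugation fixes $P\in\M_3(\R)$, so $P$ is in the moduli sense real.
\item[] Pseudo-real means field of moduli $\R$ but not definable over $\R$, which is exactly the condition above.
\end{itemize}
This step only requires matching definitions.

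\emph{Item (4).} Assume $\xi_0\mm\ne0$, the remaining case being trivial. By \cref{thm:criterion} over $L=\Q(\sqrt d)$, $L$ is a field of definition iff $\beta_L=0$. By the standard fact, this holds iff $L$ embeds in the quaternion algebra. By Hasse--Minkowski and the local-global principle for embeddings, this holds iff $L\otimes\Q_v$ is a field at every ramified place $v$, i.e.\ $d$ is not a square in $\Q_v$. At real ramified places this reads $d<0$. The main obstacle I expect is being careful with the local embedding criterion, including the case $L=\Q\times\Q$ and the real place; all the other steps are formal consequences of \cref{thm:criterion}.
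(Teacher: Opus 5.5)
Your proposal is correct and follows the paper's proof. The paper likewise reads (1) and (2) off the shape of the symbol (using $\langle J,J\rangle=-\tfrac12\operatorname{disc}J$ and the remark after \cref{eq:sylv2}), (3) off the real Hilbert symbol $(a,b)_\R=-1\iff a,b<0$ (equivalent to your definiteness check on $\langle\xi_0,\mm/\xi_0,\mm/2\rangle$), and (4) off the standard description of the quadratic splitting fields of a quaternion algebra.

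One small repair in (3): at a pseudo-real point the covariants $u_i$ cannot all be real, so ``the same real quadratic space'' is not literally right. Instead, note that the leading principal minors of $G$ are $\xi_0$, $\mm$ and $2\xi_5^2$ (\cref{lem:gram}), and apply Sylvester's criterion.
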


\begin{proof}
(1) and (2) are immediate from the shape of the symbol, which does not involve $\xi_5$; for the geometric
interpretations recall $\langle J,J\rangle=-\tfrac12\operatorname{disc}J$ and the paragraph after \cref{eq:sylv2}. For (3),
$(a,b)_\R=-1$ if and only if $a,b<0$. (4) is the standard description of the quadratic splitting fields of a
quaternion algebra.
\end{proof}

The proof of \cref{thm:criterion} is constructive, and we use it as an algorithm.

\begin{prop}[Explicit descent]\label{prop:explicit}
Let $P\in U(k)$ with $\xi_0\mm\neq0$ be fine, with representative $\xi$. Put $u_1=x^2+\tfrac12\xi_0y^2$, so that
$\langle u_1,u_1\rangle=\xi_0$; the orthogonal complement of $u_1$ is spanned by $e=xy$ and $f=x^2-\tfrac12\xi_0y^2$, with
$\langle e,e\rangle=-\tfrac12$, $\langle f,f\rangle=-\xi_0$, $\langle e,f\rangle=0$. Let $(a:b:c)$ be a $k$-point of the conic
$-\tfrac12a^2-\xi_0b^2=\frac{\mm}{\xi_0}c^2$ with $c\neq0$ (such a point exists by \cref{thm:criterion}), and put
\[
u_2=\frac{\xi_3}{\xi_0}u_1+\frac ac\,e+\frac bc\,f,\qquad u_3=(u_1,u_2)_1 .
\]
The linear system $(I,u_ku_l)_4=T_{kl}(\xi)$, $1\le k\le l\le3$, with $T_{kl}$ as in \cref{eq:T}, has a unique solution $I\in V_4(k)$, it satisfies $(I,u_1)_2=u_2$, and
\[
\phi=\frac{x\,u_1+\partial_yI}{y\,u_1-\partial_xI}
\]
is a cubic map over $k$ whose pair of cubics $\bigl(\tfrac14(xu_1+\partial_yI),\tfrac14(yu_1-\partial_xI)\bigr)$ has
invariants $\xi$.
\end{prop}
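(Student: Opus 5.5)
The plan is to make the proof of \cref{thm:criterion} concrete. By that theorem there is a pair $(I_0,J_0)$ over $k$ with invariants $\xi$. The goal is to show that the vectors $u_1,u_2,u_3$ constructed in the statement have exactly the Gram matrix $\widetilde G$ of \cref{lem:gram}. The remaining steps of that proof then go through verbatim.

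\emph{Step 1: check the Gram matrix.} First verify the small pairings directly from $\langle u,u\rangle=\tfrac12(4ac-b^2)$. For $u_1=x^2+\tfrac12\xi_0y^2$ we get $\langle u_1,u_1\rangle=\tfrac12\cdot4\cdot\tfrac12\xi_0=\xi_0$. We also have $\langle xy,xy\rangle=-\tfrac12$ and $\langle f,f\rangle=-\xi_0$. Polarising gives $\langle u_1,e\rangle=\langle u_1,f\rangle=\langle e,f\rangle=0$, and these vectors form a basis because $\xi_0\ne0$. Then:
\begin{itemize}
\item $\langle u_1,u_2\rangle=\xi_3$;
\item $\langle u_2,u_2\rangle=\xi_3^2/\xi_0+(-\tfrac12a^2-\xi_0b^2)/c^2=\xi_3^2/\xi_0+\mm/\xi_0$, which equals $\tfrac13\xi_0\xi_1+\xi_4$ by the definition of $\mm$.
\end{itemize}
Set $u_3=(u_1,u_2)_1$. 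By the orthogonality of the Jacobian and the classical identity recalled in the proof of \cref{lem:gram}, $u_3$ is orthogonal to $u_1,u_2$ and satisfies $\langle u_3,u_3\rangle=\tfrac12(\xi_0(\tfrac13\xi_0\xi_1+\xi_4)-\xi_3^2)=\tfrac12\mm$. Hence $(u_1,u_2,u_3)$ has Gram matrix $\widetilde G$, and it is a basis since $\det\widetilde G=\tfrac12\mm^2\ne0$. The conic point exists because $\langle\xi_0,\mm/\xi_0,\mm/2\rangle$ is isotropic, and this is equivalent to the diagonal form $\langle -\tfrac12,-\xi_0,-\mm/\xi_0\rangle$ being isotropic. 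A point with $c=0$ would give an isotropic vector in $\langle -\tfrac12,-\xi_0\rangle$; in that case $-2\xi_0$ is a square, and the conic is then rational, so it has infinitely many points and one with $c\ne0$ can be chosen.

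\emph{Step 2: transport a model by Witt's theorem.} Let $(I_0,J_0)$ be a $k$-model with covariants $u_i^0$. As in the proof of \cref{thm:criterion}, Witt's theorem gives a proper isometry $g$ of $V_2$, equal to substitution by some $\sigma\in\SL_2(\overline k)$, with $u_1^0\mapsto u_1$, $u_2^0\mapsto u_2$, and hence $u_3^0\mapsto u_3$. The transported pair $(I',J')=(I_0,J_0)^\sigma$ has invariants $\xi$, $J'=u_1$, $(I',J')_2=u_2$, and covariant Jacobian $u_3$. Therefore $I'$ satisfies $(I',u_ku_l)_4=T_{kl}(\xi)$ by \cref{eq:T}. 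The products $u_ku_l$ span $V_4$ and $(\,,\,)_4$ is nondegenerate on $V_4$, so the system has a unique solution. That solution is $I'$, and it lies in $V_4(k)$ because the coefficients of the system are in $k$. This also gives $(I,u_1)_2=u_2$.

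\emph{Step 3: read off the map.} Apply the inverse $(I,J)\mapsto\bigl(\tfrac14(xJ+\partial_yI),\tfrac14(yJ-\partial_xI)\bigr)$ with $J=u_1$. This produces the stated pair of cubics over $k$. The pair has nonzero resultant, since its invariants are $\xi$ and $I_6(\xi)\ne0$ on $\M_3$, so it defines a cubic map with invariants exactly $\xi$.

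The main obstacle is Step 1: the verification that the chosen $u_2$ reproduces $\langle u_2,u_2\rangle$ exactly. This reduces to the identity $\xi_3^2+\mm=\xi_0(\tfrac13\xi_0\xi_1+\xi_4)$, which is immediate from the definition of $\mm$. The $c\neq0$ subtlety in the choice of conic point also needs the care indicated above. Everything else is inherited from the proof of \cref{thm:criterion}.
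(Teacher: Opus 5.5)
Your proposal is correct and follows the paper's proof. The paper likewise checks that $u_1,u_2$ have Gram matrix $\bigl(\begin{smallmatrix}\xi_0&\xi_3\\ \xi_3&\frac13\xi_0\xi_1+\xi_4\end{smallmatrix}\bigr)$ via $\langle u_2,u_2\rangle=(\xi_3^2+\mm)/\xi_0$, then defers to the Witt-extension and linear-system argument of \cref{thm:criterion}, noting that since $\mm\ne0$ no sign choice is needed; your justification of a conic point with $c\neq0$, and your use of a $k$-model $(I_0,J_0)$ where any pair over $\overline k$ would do, only fill in details the paper leaves implicit.
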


\begin{proof}
The vectors $u_1,u_2$ have Gram matrix $\bigl(\begin{smallmatrix}\xi_0&\xi_3\\ \xi_3&\frac13\xi_0\xi_1+\xi_4\end{smallmatrix}\bigr)$ by
construction, since $\langle u_2,u_2\rangle=\xi_3^2/\xi_0+\mm/\xi_0$. The rest is the proof of \cref{thm:criterion};
in particular no choice of sign is needed. In our computations the system was always consistent, as it must be.
\end{proof}

In practice we solve the conic with \texttt{qfsolve} \cite{pari}; if it returns $c=0$ the
plane $u_1^\perp$ is isotropic and $u_2$ is obtained by moving along the isotropic vector. The map produced has
large coefficients, and \cref{sec:models} explains how to make it small.

\begin{exa}[The smallest coarse points]\label{ex:small}
The nine points of height $1$ are fine, and the smallest coarse points have height $3^{1/3}$. One of them is
$P=[-1:0:3:-3:4:0]$, with $\mm=-13$ and symbol $(13,2)$, ramified at $2$ and $13$. Here $\xi_5=0$, so $P$ lies on
the branch divisor, and $13\mid\mm$ with $\bigl(\frac{-2\xi_0}{13}\bigr)=\bigl(\frac2{13}\bigr)=-1$, as in
\cref{thm:local}; $P$ has a model over $\Q(\sqrt2)$.
\end{exa}

\begin{exa}[A pseudo-real map]\label{ex:west}
The map $\psi=i\bigl(\frac{x-1}{x+1}\bigr)^3$ \cite[\S4.10]{silverman-2007-ads}, \cite{west-cubic} has field of moduli $\Q$; its canonical
representative is $(144,20,12,-288,192,6912)$, of height $12$, with $\mm=82944>0$ and $\xi_0>0$. So $\widetilde G$ is
positive definite, $\psi$ is pseudo-real, and $(-\mm,-2\xi_0)=(-1,-2)$ ramifies at $\{\infty,2\}$: $\psi$ has no model
over $\Q$, in agreement with \cite{west-cubic}, but has one over $\Q(\sqrt{-1})$, $\Q(\sqrt{-2})$ and
$\Q(\sqrt{-5})$, for instance.
\end{exa}

\begin{exa}[Maps definable over $\Q$ in every odd degree]\label{ex:hq}
In \cite[\S5]{hidalgo-quispe} it is stated that for every odd $d\ge3$ the map
$\psi_d(z)=(3+2\sqrt2)\frac{z^d+1}{z^d-1}$ has field of moduli $\Q$ but cannot be defined over $\Q$. For $d=3$ the
invariants of $\psi_3$, scaled by $\lambda=(1+\sqrt2)^{-1}$, are rational, with canonical representative
$(-72,-10,42,-1008,48,-86400)$, a point of $U(\Q)$ with $-2\xi_0=144=12^2$. By \cref{cor:consequences} it is fine.
\Cref{prop:explicit}, followed by the reduction of \cref{sec:models},
produces $\phi(z)=\frac{4z^3+3z^2+6z-3}{z^3+3z-2}$, whose divergence $J_\phi=12x(x+y)$ has rational roots, and indeed
\[
\gamma^{-1}\circ\psi_3\circ\gamma=\phi,\qquad \gamma=\begin{pmatrix}-1&1-\sqrt2\\ 1-\sqrt2&1\end{pmatrix},
\]
as one checks by direct computation (the two maps also have the same fixed-point multipliers, approximately $-9.468$, $-0.0294$
and $3.249\pm7.882i$). So $\psi_3$ is definable over $\Q$.

The same $\gamma$ works in higher odd degree: a direct computation shows that $\gamma^{-1}\circ\psi_d\circ\gamma$ has rational coefficients for $d=5,7,9$, for instance
\[\gamma^{-1}\circ\psi_5\circ\gamma=\frac{13z^5+25z^4+30z^3-10z^2+25z-11}{3z^5+5z^4+10z^3-10z^2+15z-7},\]
so the statement of \cite[\S5]{hidalgo-quispe} fails for every odd $d$. Indeed, put
$\alpha=3+2\sqrt2$, $h(u)=\frac{u+1}{u-1}$ and $r(z)=-1/z$, and let $\sigma$ be the nontrivial automorphism of
$\Q(\sqrt2)$, so that $\sigma(\alpha)=\alpha^{-1}$. For odd $d$ one has $\psi_d(-1/z)=-\alpha/h(z^d)$, hence
\[r^{-1}\circ\psi_d\circ r=\alpha^{-1}h(z^d)=\psi_d^\sigma .\]
The descent datum of $\psi_d$ is thus the rational involution $r$, whose class in $H^1(\Q,\PGL_2)$ is the quaternion
algebra $(2,-1)_\Q$, split since $1^2-2\cdot1^2=-1$; and $\gamma$ is an explicit splitting,
$\gamma^\sigma=-(1+\sqrt2)\,r\gamma$, so that $(\gamma^{-1}\psi_d\gamma)^\sigma=\gamma^{-1}r^{-2}\psi_dr^2\gamma=\gamma^{-1}\psi_d\gamma$
for every odd $d$.

\end{exa}

\begin{rem}[Checks]\label{rem:checks}
(a) For $21\,615$ random maps $\phi\in\Rat_3(\Q)$ in the generic stratum, of naive heights $1$ to $30$, the symbol
$(-\mm,-2\xi_0)$ of the moduli point is split. (b) The database of \cite{2024-4-arxiv} contains every map of naive height at
most $4$; its $4\,785\,584$ geometric conjugacy classes, recomputed from their representatives and normalised as in
\cref{sec:background}, are all fine by \cref{thm:criterion} and \cref{prop:L2}, and their distribution over the strata
reproduces \cite[Table~3]{2024-4-arxiv} exactly ($4\,779\,346$ generic, $2\,972$ in $\Lo1$, $2\,610$ in $\Lo2$, $29$, $119$,
$506$, $1$, $1$). (c) For each of the $14\,225$ fine points of the census sampled in \cref{sec:models}, \cref{prop:explicit} produced a
map over $\Q$ whose invariants are a weighted rescaling of $\xi$, and for $200$ random coarse points the conic had no
rational point, as it must.
\end{rem}

\section{The arithmetic of the obstruction class}
\label{sec:local}

Over the generic stratum $U$ the group $\PGL_2$ acts freely on $\Rat_3|_U$ and the quotient is geometric
\cite{levy-2011}, so $\Rat_3|_U\to U$ is a $\PGL_2$-torsor. Its class
\[
\beta\in H^1(U,\PGL_2)\longrightarrow\Br(U)[2]
\]
specialises at $P\in U(k)$ to the class $\beta(P)\in\Br(k)[2]$ of a quaternion algebra, and $P$ is fine if and only
if $\beta(P)=0$. By \cref{thm:criterion}, $\beta(P)=(-\mm,-2\xi_0)$ when $\xi_0\mm\ne0$, and $\beta(P)=0$ otherwise;
equivalently, the conic of $\widetilde G$ (when $\mm\ne0$) or of $G$ (when $\xi_5\ne0$) is a Severi--Brauer variety of
$\beta$. Let $K=\Q(\Xb)$ be the function field and $\beta_K$ the generic class.


\begin{thm}\label{thm:unram}
$\beta_K$ is unramified along every prime divisor of $\Xb$. In particular it is unramified along the boundary
$\{I_6=0\}$, along the branch divisor $\{\xi_5=0\}$, and along $\{\mm=0\}$ and $\{\xi_0=0\}$, where the
representative $(-\mm,-2\xi_0)$ has poles or zeros.
\end{thm}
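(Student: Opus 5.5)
Our plan uses the fact that $\beta$ comes from a geometric object, a conic bundle, rather than only from the symbol $(-\mm,-2\xi_0)$. At each codimension-one point $\eta$ of $\Xb$ we exhibit a local description of $\beta_K$ that is visibly unramified at $\eta$. Two facts are used throughout: a class represented by an Azumaya algebra, equivalently by a smooth conic bundle, on a neighbourhood of the generic point of a divisor $D$ has zero residue along $D$; and $\Xb$ is normal, so its local rings at prime divisors are discrete valuation rings.

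\emph{Step 1 (divisors meeting $U$).} Every prime divisor $D$ of $\Xb$ other than $\{I_6=0\}$ meets $U$, because the strata $\Lo1,\dots,\Lo7$ have codimension at least two. Over $U$ the torsor $\Rat_3|_U\to U$ defines $\beta\in H^1(U,\PGL_2)$, and this is an honest Azumaya class on $U$. Its residue along every divisor meeting $U$ therefore vanishes. This covers $\{\xi_5=0\}$, $\{\mm=0\}$ and $\{\xi_0=0\}$, where the symbol only looks ramified. As a sanity check we can also see this by hand along $\{\mm=0\}$. There $\xi_5\neq0$ generically, and by \cref{eq:sylv2} we have $-2\xi_0\equiv -2\xi_0\xi_5^2\cdot\xi_5^{-2}=(h^2-\mm n)/\xi_5^2$, which is a square modulo $\mm$. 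Hence $-2\xi_0$ is a square in the residue field, and the residue of $(-\mm,-2\xi_0)$ vanishes. Along $\{\xi_0=0\}$ we get $-\mm\equiv\xi_3^2$ modulo $\xi_0$, which is again a square. This gives a direct symbol-theoretic proof of these two cases that is independent of the torsor.

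\emph{Step 2 (the boundary $\{I_6=0\}$).} This is the main obstacle, since $U$ does not meet it and the torsor is not available. We plan to observe that the conic of $\widetilde G$ (or of $G$) is defined on the whole of $\Xb\cap\{\mm\xi_0\neq0\}$: its entries are polynomials in the $\xi_i$ and its discriminant $\tfrac12\mm^2$ is a unit there. So on this open set the conic bundle is smooth, and it represents $\beta$ by \cref{thm:criterion}. It remains to check that $\{I_6=0\}$ is not contained in $\{\xi_0\mm=0\}$. For this it suffices to exhibit one point with $I_6=0$ and $\xi_0\mm\neq0$, obtained from a degenerate pair $(I,J)$ whose cubics share a root; this is a finite computation. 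At such a point the conic is smooth, so $\beta_K$ extends to an Azumaya algebra near the generic point of the boundary and its residue vanishes. We need to confirm that the relevant component is irreducible; if $\{I_6=0\}\cap\Xb$ is reducible, we carry out this check on each component.

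\emph{Step 3 (remaining divisors).} Any prime divisor $D$ of $\Xb$ either meets $U$, which is Step 1, or lies in $\{I_6=0\}\cup\Lo1\cup\dots\cup\Lo7$. The strata have codimension at least two, so $D$ is a component of the boundary, which is Step 2. Alternatively, we can argue uniformly that for every $D$ at least one of the two smooth conics, that of $\widetilde G$ where $\mm\xi_0\neq0$ or that of $G$ where $\xi_5\neq0$, is defined at the generic point of $D$. The divisors to treat are then only those contained in $\{\mm=0\}\cap\{\xi_5=0\}$ or in $\{\xi_0=0\}\cap\{\xi_5=0\}$. The first has codimension two by \cref{lem:Z}. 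The second is handled by the explicit square computation of Step 1.

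We expect the delicate point to be exactly the uniform coverage in Step 3: verifying that the two conic models of \cref{lem:gram} describe the same Brauer class on their overlap, including near the boundary, and that no divisor falls in the bad loci of both. The identity \cref{eq:sylv2} and \cref{lem:Z} are the tools for this.
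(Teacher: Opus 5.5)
Your proposal is correct. Step~1 is the paper's argument for divisors meeting $U$; where you differ from the paper is at the boundary.

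\emph{The paper's route.} It uses $G$, with $\det G=2\xi_5^2$, and shows that $\xi_5$ vanishes identically on no component of $\{I_6=0\}\cap\Xb$. On the boundary $\xi_5=-8R$ with $R$ a polynomial in $\xi_0,\dots,\xi_4$. So the boundary is $\{F=64R^2\}\subset\P(2,2,3,3,4)$, and it meets $\{\xi_5=0\}$ only in $\{F=R=0\}$, which has codimension two in $\Xb$ because $F$ is irreducible. This needs no computation and no knowledge of the components.

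\emph{Your Step~2.} You use $\widetilde G$, whose determinant $\tfrac12\mm^2$ involves only $\mm$, so the condition $\xi_0\neq0$ is superfluous. The argument therefore needs the boundary to be irreducible and one boundary point with $\mm\neq0$. Both hold.
The boundary is the image, under the quotient map, of the irreducible resultant hypersurface minus the nullcone. For the point, take $I=x^4+xy^3$, $J=x^2+y^2$. This gives the cubics $\tfrac14x(x^2+4y^2)$ and $\tfrac14x^2(y-4x)$, which share the factor $x$. Here $\xi_0=2$, and $(I,J)_2$ is proportional to $x(2x+y)$, which has no root in common with $J$, so $\mm\neq0$.

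\emph{Your Step~3 alternative.} This is in fact the shortest complete proof, and it makes both the torsor and the boundary computation unnecessary. By \cref{lem:Z}, no prime divisor of $\Xb$ lies in $\{\mm=\xi_5=0\}$. So at every $D$ one of $\widetilde G$, $G$ is unimodular over $\mathcal O_{\Xb,D}$, after rescaling by a weight-one function that is a unit at $D$, as in the paper. You should state this rescaling, since the entries have different weights. The second locus $\{\xi_0=\xi_5=0\}$ of your Step~3 does not arise.

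\emph{Your anticipated delicate point.} It is not an obstacle. The two forms differ by a change of basis over $K$: write $v_3=\alpha u_1+\beta u_2+\gamma u_3$, where $\alpha,\beta$ come from $G_{13},G_{23}$ and $\gamma=\pm2\xi_5/\mm$ comes from the coefficient determinants. Hence both forms represent $\beta_K$. Unramifiedness is then tested one discrete valuation ring at a time, so there is nothing to glue.

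Finally, you take the normality of $\Xb$ for granted. The paper proves it by Serre's criterion, using that the singular locus of the affine cone $\xi_5^2=F$ has dimension three.
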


\begin{proof}
The affine cone over $\Xb$ is the hypersurface $\xi_5^2=F$ in $\mathbb A^6$, whose singular locus
$\{\xi_5=0,\ \nabla F=0\}$ has dimension three (a Gr\"obner basis computation), so the cone is normal by Serre's
criterion, and so is $\Xb$; the local ring of $\Xb$ at the generic point of a prime divisor is therefore a discrete
valuation ring.
Let $D$ be a prime divisor of $\Xb$. If $D$ meets $\M_3$, its generic point lies in $U$, because $\M_3\setminus U$
is a union of strata of dimension at most two (\cref{tab:strata}); since $\beta$ is a class on $U$, it is
unramified along $D$. Otherwise $D$ is a component of the boundary $\{I_6=0\}\cap\Xb$. Choose a rational function
$t$ of weight one on the cone over $\Xb$ which is a unit at the generic point of $D$ (a quotient of two coordinate
monomials of weights differing by one), and let $q=\sum_{i,j}G_{ij}\,t^{-(w(i)+w(j))}X_iX_j$ with
$w(1),w(2),w(3)=1,2,3$; this is a ternary form over the local ring $\mathcal O_{\Xb,D}$ representing $\beta_K$, and
$\det q=2(\xi_5/t^6)^2$. The function $\xi_5$ does not vanish identically on $D$: on $\{I_6=0\}$ we have
$96\xi_5=96\xi_5-768I_6$, a polynomial in $\xi_0,\dots,\xi_4$, and if it vanished on $D$ then $D$ would be contained in
$\{F=0,\ I_6=0\}$, which has codimension two in $\Xb$: writing $\xi_5=-8R$ on the boundary, with $R=I_6-\tfrac18\xi_5$
of weight $6$, the boundary is the hypersurface $\{F=64R^2\}$ of $\P(2,2,3,3,4)$, and its intersection with
$\{F=0\}$ is $\{F=R=0\}$, of codimension two because $F$ is irreducible of weight $12$. So $q$ is unimodular over the discrete valuation ring
$\mathcal O_{\Xb,D}$, its conic has good reduction, and $\beta_K$ is unramified at $D$.
\end{proof}

The theorem says that the obstruction class has no ramification divisor on $\Xb$ at all. This is the essential
difference with genus two, where the class of Mestre's conic is ramified exactly along the divisor of curves
with an extra involution \cite{2026-stats}: the automorphism loci of $\M_3$ have codimension two. The
ramification has not disappeared, it has moved into the singular locus. Since $\M_3$ is rational
\cite{levy-2011} and $\beta_{\overline\Q}\ne0$ (\cref{thm:L2geom}), $\beta$ cannot extend to a smooth compactification,
and the next two results show where it lives: above the closures of the two loci $\Lo1$, $\Lo2$ of maps with an
involution. We write $\overline{\Lo1}=\{\xi_2=\xi_3=0\}\cap\Xb$ for the closure of $\Lo1$.


\begin{thm}\label{thm:local}
Let $P\in U(\Q)$ with canonical representative $\xi$, and $p\ge5$ a prime. The matrix $G(P)$ is $p$-integral.
\begin{enumerate}
\item If $\inv_p\beta(P)\ne0$, then $G(P)$ has rank at most one modulo $p$ (if $\xi_5\ne0$), respectively
$p\mid\mm$ (if $\xi_5=0$), and $P$ reduces modulo $p$ into $Z\cup\overline{\Lo1}$. It reduces into $Z$ unless $p$
divides every coordinate of $\xi$.
\item Suppose $\min_iv_p(\xi_i)/w_i=\tfrac12$, $v_p(\xi_0)=1$, and $p\nmid\xi_0'\xi_1'+3\xi_4'$, where
$\xi_0'=\xi_0/p$, $\xi_1'=\xi_1/p$, $\xi_4'=\xi_4/p^2$. Then $\inv_p\beta(P)\ne0$ if and only if
$\bigl(\frac{-6(\xi_0'\xi_1'+3\xi_4')}p\bigr)=-1$.
\item If $p\nmid\xi_0$ and $\mm\ne0$, then $\inv_p\beta(P)\ne0$ if and only if $v_p(\mm)$ is odd and $\bigl(\frac{-2\xi_0}p\bigr)=-1$.
If moreover $p\nmid\xi_5$, then $\bigl(\frac{-2\xi_0}p\bigr)=1$ whenever $v_p(\mm)$ is odd, so that
$\inv_p\beta(P)\ne0$ forces $p\mid\xi_5$.
\item $\inv_\infty\beta(P)\ne0$ if and only if $\xi_0>0$ and $\mm>0$.
\end{enumerate}
\end{thm}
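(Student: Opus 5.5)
The approach is to compute local Hilbert symbols, using the two descriptions of $\beta(P)$: the diagonal symbol $(-\mm,-2\xi_0)$ from \cref{thm:criterion}, and the ternary form $G$ of \cref{lem:gram}, whose conic represents $\beta(P)$ when $\xi_5\ne0$. First we check $p$-integrality of $G(P)$. The entries of $G$ are polynomials in the $\xi_i$ whose denominators involve only $2$ and $3$, and the canonical representative is integral. So for $p\ge5$ the matrix $G(P)$ is $p$-integral, and so are $\widetilde G$ and $\mm$.

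For (1) we argue by contraposition from the standard fact that a $p$-integral ternary form over $\Z_p$ ($p$ odd) whose reduction has rank $\ge2$ is isotropic: the reduced conic is either smooth or a pair of lines, and in both cases it has a smooth $\mathbb F_p$-point, which Hensel's lemma lifts.
\begin{itemize}
\item If $\xi_5\ne0$ and $\bar G$ has rank $\ge2$, then $\beta(P)_p=0$.
\item If $\xi_5=0$, then $\mm\ne0$ (as $P\notin\Lo2$) and we use $\widetilde G$. If $p\nmid\mm$, the $2\times2$ block together with the entry $\tfrac12\mm$ gives a reduction of rank $\ge2$, and again $\inv_p\beta(P)=0$.
\end{itemize}
Next, rank $\le1$ modulo $p$ forces $\mm\equiv0$ (a principal $2\times2$ minor) and $\det G=2\xi_5^2\equiv0$, so $\xi_5\equiv0$. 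Hence the reduction $\bar\xi$ lies in $\{\xi_5=\mm=0\}$, which is $Z$ by \cref{lem:Z}, provided $\bar\xi\ne0$ in the weighted sense. If $p$ divides every coordinate, primitivity of the canonical representative forces $p\nmid$ some coordinate after rescaling by $p^{w_i/2}$. That is the case $\min v_p(\xi_i)/w_i=\tfrac12$ (or other fractional minima), and the rescaled point has the odd-weight coordinates divisible by $p$. So it lands in $\overline{\Lo1}=\{\xi_2=\xi_3=0\}$. I would phrase this as: rescale by $\lambda=p^{-m}$ with $m=\min v_p(\xi_i)/w_i$ over $\Q_p(p^{1/2})$ or $\Q_p(p^{1/3})$. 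The symbol is unchanged over $\Q_p$ only up to the representative's even-weight behaviour, so I would just apply the rank argument to $G(\lambda\cdot\xi)$ directly over $\Q_p$, using that $\xi_0,\mm$ have even weight.

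For (3), with $p\nmid\xi_0$, the symbol $(-\mm,-2\xi_0)_p$ with $-2\xi_0$ a $p$-unit equals $\bigl(\frac{-2\xi_0}{p}\bigr)^{v_p(\mm)}$, which is the tame symbol formula. For the supplement we use \cref{eq:sylv2}, $2\xi_0\xi_5^2=\mm n-h^2$. If $p\nmid\xi_5$ and $p\mid\mm$, then $-h^2\equiv2\xi_0\xi_5^2$, so $-2\xi_0$ is a nonzero square mod $p$. Part (4) is \cref{cor:consequences}(3).

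Part (2) is the main obstacle. Here $\xi_0=p\xi_0'$, and by the hypothesis on $\min v_p$ we may write $\xi_i=p^{w_i/2}\eta_i$, with $\eta_2,\eta_3\in p^{1/2}\Z_p$ (since they have odd weight and integral $p$-adic valuation, $v_p(\xi_{2,3})\ge2$). Then we compute $\mm=p^2\bigl(\tfrac13\xi_0'^2\xi_1'+\xi_0'\xi_4'\bigr)-\xi_3^2$ with $v_p(\xi_3^2)\ge4$. So $\mm=p^2\xi_0'(\xi_0'\xi_1'+3\xi_4')/3+O(p^4)$, and $-\mm\equiv-p^2\xi_0'c/3$ up to unit squares, with $c=\xi_0'\xi_1'+3\xi_4'$. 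Hence
\[
(-\mm,-2\xi_0)=(-3\xi_0'c,-2p\xi_0')=(-3\xi_0'c,p)\cdot(-3\xi_0'c,-2\xi_0').
\]
The second factor is a symbol of units, hence trivial. The first is $\bigl(\frac{-3\xi_0'c}{p}\bigr)$, which I expect must simplify to $\bigl(\frac{-6c}{p}\bigr)$. The discrepancy, a factor $\bigl(\frac{2\xi_0'}{p}\bigr)$, must be absorbed by the $a,-a$ trick: $(-\mm,-2\xi_0\mm)$. So I would redo this carefully using $(-\mm,-2\xi_0\mm)$, where the $p^2$ parts cancel in pairs, and track the unit parts. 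Getting these unit factors exactly right, and justifying that the $O(p^4)$ error does not change the square class (Hensel, since $p\nmid c\xi_0'$), is where I expect the real work.
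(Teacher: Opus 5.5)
\textbf{There are genuine gaps in (1) and (2).} Your route is the same as the paper's: reduce the Gram form modulo $p$ for (1), and compute Hilbert symbols for (2)--(4). Parts (3), (4) and the case $\xi_5=0$ of (1) are correct and agree with the paper. The remaining problems are a false step and an omitted case in (1), and an unresolved slip in (2).

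\textbf{Part (1).} The ``standard fact'' you invoke is false. A rank-two reduction can be a pair of conjugate lines whose only $\mathbb F_p$-point is the singular vertex. For example, $x^2-uy^2+pz^2$ with $u$ a non-residue has reduction of rank two but is anisotropic over $\Q_p$. What excludes this for $G$ is the parity of $v_p(\det G)=v_p(2\xi_5^2)$. In a Jordan splitting $\langle a_1p^{e_1},a_2p^{e_2},a_3p^{e_3}\rangle$, rank $\ge2$ modulo $p$ forces $e_1=e_2=0$, hence $e_3$ is even. So $G\cong\langle a_1,a_2,a_3\rangle$ over $\Q_p$, which is isotropic. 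Without this parity you only get rank $\le2$, i.e.\ $p\mid\xi_5$, which is not the statement.

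\textbf{The case where $p$ divides every coordinate.} You treat only $r=\min_iv_p(\xi_i)/w_i=\tfrac12$. For $r\in\{\tfrac13,\tfrac23\}$ it is $\xi_0,\xi_1,\xi_4$ that vanish in the reduction, not $\xi_2,\xi_3$. If moreover $v_p(\xi_3)=3r$, the reduction has $\xi_3\ne0$ and $\mm$ reducing to $-\xi_3^2\ne0$, so it lies in neither $Z$ nor $\overline{\Lo1}$. There you must prove $\inv_p\beta(P)=0$ directly. This holds because $-\mm\equiv\xi_3^2\pmod{p^{6r+1}}$ with $6r$ even, so $-\mm$ is a square in $\Q_p$. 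If $v_p(\xi_3)>3r$, the reduction is $[0:0:1:0:0:0]\in Z$. If $r$ has denominator $4$ or $6$, only $\xi_4$ or $\xi_5$ survives, which is impossible on $\Xb$.

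\textbf{Part (2).} The discrepancy $\bigl(\frac{2\xi_0'}p\bigr)$ comes from a valuation error. No manipulation with $(a,-a)=0$ can remove it, since the symbol is already well defined. In fact $\tfrac13\xi_0^2\xi_1=\tfrac13p^3\xi_0'^2\xi_1'$ and $\xi_0\xi_4=p^3\xi_0'\xi_4'$, while $v_p(\xi_3^2)\ge4$. Hence $\mm=\tfrac13p^3\xi_0'c+O(p^4)$ with $c=\xi_0'\xi_1'+3\xi_4'$, and $v_p(\mm)=3$, not $2$. Up to squares in $\Q_p$ (Hensel, since $p\nmid\xi_0'c$), write $-\mm=p\,a$ and $-2\xi_0=p\,b$ with units $a=-3\xi_0'c$ and $b=-2\xi_0'$. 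Then
$(pa,pb)_p=\bigl(\frac{-ab}p\bigr)=\bigl(\frac{-6\xi_0'^2c}p\bigr)=\bigl(\frac{-6c}p\bigr)$, which is exactly the claimed criterion.
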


\begin{proof}
The entries of $G$ have denominators dividing $18$. 

(1) Let $\xi_5\neq0$. Over $\Z_p$ the form $G$ has a Jordan
splitting $\langle a_1p^{e_1},a_2p^{e_2},a_3p^{e_3}\rangle$ with units $a_i$ and $e_1\le e_2\le e_3$, and
\[
e_1+e_2+e_3=v_p(\det G)=2v_p(\xi_5)
\]
 is even. If the rank modulo $p$ is at least two, then $e_1=e_2=0$, so $e_3$ is
even and $G\cong\langle a_1,a_2,a_3\rangle$ is unimodular, hence isotropic over $\Q_p$ (Chevalley--Warning and Hensel),
and $\inv_p\beta(P)=0$. If $\xi_5=0$, then $\mm\ne0$ and the same argument applies to $\widetilde G$, of determinant
$\tfrac12\mm^2$; its rank modulo $p$ is at most one only if $p\mid\mm$. If some coordinate of $\xi$ is a $p$-adic
unit, the reduction of $P$ modulo $p$ is the reduction of $\xi$, and in both cases it satisfies $\xi_5=\mm=0$, so it
lies in $Z$. Suppose now that $p$ divides every coordinate of $\xi$, and let $r=\min_iv_p(\xi_i)/w_i$; since $\xi$ is
canonical, $0<r<1$. The reduction of $P$ is the reduction of $(p^{-rw_i}\xi_i)_i$, computed over $\Z_p[p^r]$, in which
the coordinates with $v_p(\xi_i)/w_i>r$ vanish. If $r$ has denominator $2$, the coordinates $\xi_2,\xi_3$ of odd
weight vanish in the reduction, which therefore lies in $\overline{\Lo1}$. If $r$ has denominator $3$, the coordinates
$\xi_0,\xi_1,\xi_4$ vanish in the reduction, and $v_p(\xi_0),v_p(\xi_1)\ge\lceil2r\rceil$, $v_p(\xi_4)\ge\lceil4r\rceil$,
$v_p(\xi_3)\ge3r$, so that $v_p(\xi_0^2\xi_1)$ and $v_p(\xi_0\xi_4)$ exceed $6r$ and
\[
-\mm=\xi_3^2-\tfrac13\xi_0^2\xi_1-\xi_0\xi_4\equiv\xi_3^2\pmod{p^{6r+1}}.
\]
 If $v_p(\xi_3)=3r$, then $-\mm$ is
$p^{6r}$ times a unit which is a square modulo $p$, and the symbol $(-\mm,-2\xi_0)_p$ is trivial. If $v_p(\xi_3)>3r$,
the reduction has $\xi_3=0$, hence $\xi_5^2=-\tfrac16\xi_2\xi_3^3=0$, and it is the point $[0:0:1:0:0:0]$, which lies
in $Z$ by \cref{lem:Z}. If $r$ has denominator $4$ or $6$, only $\xi_4$, respectively $\xi_5$, survives in the
reduction, which is impossible on $\Xb$.

(2) Since $r=\tfrac12$, $v_p(\xi_1)\ge1$, $v_p(\xi_3)\ge2$ and $v_p(\xi_4)\ge2$,
so 
\[
\mm\equiv p^3\xi_0'(\tfrac13\xi_0'\xi_1'+\xi_4')\pmod{p^4}
\]
 has valuation $3$, and
\[
(-\mm,-2\xi_0)_p=\bigl(\frac{-1}p\bigr)\bigl(\frac{-\mm/p^3}p\bigr)\bigl(\frac{-2\xi_0'}p\bigr)
=\bigl(\frac{-2\xi_0'^2(\xi_0'\xi_1'+3\xi_4')/3}p\bigr)=\bigl(\frac{-6(\xi_0'\xi_1'+3\xi_4')}p\bigr).
\]
 (3) For $p\nmid2\xi_0$ the Hilbert symbol is
\[
(-\mm,-2\xi_0)_p=\bigl(\frac{-2\xi_0}p\bigr)^{v_p(\mm)}.
\]
 If $v_p(\mm)$ is odd and $p\nmid\xi_5$, the identity
\cref{eq:sylv2} gives $-2\xi_0\equiv(h/\xi_5)^2\pmod p$. 

(4) is \cref{cor:consequences}(3).
\end{proof}

Thus the obstruction set $S_P$ of a point $P\in U(\Q)$ is contained in
\[
\{\infty,2,3\}\ \cup\ \{p\ge5:\ P\bmod p\in Z\cup\overline{\Lo1}\},
\]
and the second set is the set of primes at which $P$ reduces into the closure of a locus of maps with an
involution (the reduction may lie on the boundary $I_6=0$). By (1), an obstructing
prime at which $P$ reduces into $\overline{\Lo1}\setminus Z$ divides every coordinate of the canonical representative,
and when moreover $v_p(\xi_0)=1$ and the reduction is not on $\overline{\Lo4}=Z\cap\overline{\Lo1}$, the
obstruction is read off from the reduction through the character $-6(\xi_0\xi_1+3\xi_4)$ by (2), as it is read off
through $-2\xi_0$ at the primes of reduction into $Z$ by (3). In the census all $75\,546\,209$
pairs $(P,p)$ with $P\in U$ and $p\ge5$ obstructed satisfy (1); for $12\,218$ of them $p$ divides every coordinate
of the canonical representative, and $10\,754$ of these reduce into $\overline{\Lo1}\setminus Z$, the smallest being
$[-15:10:0:25:25:250]$, of height $15^{1/2}$, at $p=5$. None of the $173\,894\,528$ pairs with $p\ge5$, $p\nmid\xi_5$ and $v_p(\mm)$ odd has
$\bigl(\frac{-2\xi_0}p\bigr)=-1$ (\cref{sec:statistics}).


\begin{thm}\label{thm:L2geom}
\begin{enumerate}
\item At the generic point of $\Lo2$ the hypersurface $\Xb$ has a transverse $A_1$-singularity: there the
function $F$ vanishes to order exactly two, and its Hessian in the two normal directions is nondegenerate, so that
étale locally $\Xb\cong\Lo2\times\{y^2=q(s,t)\}$ with $q$ a nondegenerate binary quadratic form.
\item The restriction of $-2\xi_0$ to $\Lo2$ is $\sigma$ in the coordinates \cref{eq:L2par}, and it is not a square
in the function field of $\Lo2$, even over $\overline\Q$ (after normalising its weight).
\item Let $E$ be the exceptional divisor of the blow-up of $\Xb$ along $Z$. Then $\beta$ is ramified along $E$,
with residue the class of $-2\xi_0$, which is nontrivial on $E\otimes\overline\Q$.
\item At the generic point of $\Lo1$ the variety $\Xb$ has a transverse $A_1$-singularity, the quotient of a smooth
point by $\mu_2$ acting by $-1$ on the two normal directions. Let $E_1$ be the exceptional divisor of the blow-up of
$\Xb$ along $\overline{\Lo1}$. Then $\beta$ is ramified along $E_1$, with residue the class of
$-6(\xi_0\xi_1+3\xi_4)$, which is nontrivial on $E_1\otimes\overline\Q$; its zero locus on $\overline{\Lo1}$ is
$\overline{\Lo4}$.
\end{enumerate}
\end{thm}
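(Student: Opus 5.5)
We prove (1) and (4) by local computations on $\Xb$, deduce (2) from the parametrisation \cref{eq:L2par}, and obtain (3) and the residue in (4) by specialising $\beta$ to arcs transverse to the codimension-two loci. Throughout, we use that on $\Xb$ the class is $(-\mm,-2\xi_0)$ (\cref{thm:criterion}).

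For (1), we work on the affine chart $\xi_0\ne0$, normalised by $\xi_0=1$ up to the residual $\mu_2$, which acts trivially on even-weight coordinates. Near a generic point of $S^\circ$ the coordinates $(\xi_1,\xi_3,\mm,h)$ are local coordinates on $\P(2,2,3,3,4)$, since $\xi_4$ is recovered from $\mm$ and $\xi_2$ from $h$, linearly with unit coefficients when $\xi_0\ne0$. By \cref{eq:sylv2}, $2\xi_0F=\mm\,n-h^2$. We compute $n$ on \cref{eq:L2par}. Since $G$ has rank one there, $G=\lambda\,\ell\ell^T$ with $\lambda\ell_1^2=\xi_0$, and $n=G_{11}G_{33}-G_{13}^2=0$ on $Z$. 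So $\mm n$ is not obviously a nondegenerate quadratic, and we expand $n=\alpha\mm+\gamma h+\dots$ to first order along $Z$. The quadratic part of $2\xi_0F$ in the normal coordinates $(\mm,h)$ is then $\alpha\mm^2+\gamma\mm h-h^2$, of discriminant $\gamma^2+4\alpha$. It remains to show that this discriminant is not identically zero on $Z$, which we check by evaluating at one explicit point of $\Lo2$. The double cover $\xi_5^2=F$ then gives $y^2=q$ with $q$ nondegenerate. This Hessian computation is the main obstacle; if the first-order expansion of $n$ is awkward, we instead compute the Hessian of $F$ directly at a sample point in the normal directions $(\partial_{\xi_4},\partial_{\xi_2})$.

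For (2), \cref{eq:L2par} gives $-2\xi_0=\sigma=s^2$ in weight two. Normalising by a weight-two function such as $u^2$ gives $\sigma/u^2$, a coordinate of the rational surface $\P(2,1,2)$. Since $\sigma$ is a coordinate of weight $2$ that is not a square of a weight-one form (the only weight-one coordinate is $u$), $\sigma/u^2$ is not a square in $\overline\Q(\sigma/u^2,w/u^2)$.

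For (3), the blow-up of the transverse $A_1$ is resolved by a single exceptional divisor $E$, a conic bundle over $Z$ (the projectivised cone $y^2=q$). We compute the residue through the valuation $v_E$. Here $v_E(\mm)=1$: along $E$, $\mm$ and $h$ vanish to order one, $\xi_5$ vanishes to order one, and $\mm$ is a nonsquare in the normal cone, being a linear form on the conic $y^2=q$. Also $v_E(-2\xi_0)=0$. Hence $\partial_E(-\mm,-2\xi_0)=-2\xi_0|_E$, pulled back from $Z$. Its nontriviality on $E\otimes\overline\Q$ follows from (2) once we check that $\overline\Q(Z)$ is algebraically closed in $\overline\Q(E)$. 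This holds if the conic fibre of $E\to Z$ is geometrically integral, which is true because $q$ is nondegenerate. If instead $v_E(\mm)$ turns out even, we choose a different representative symbol $(-\mm',-2\xi_0)$ built from $\widetilde G$ versus $G$; we would first test the valuation numerically on a transverse arc through a point of $\Lo2$.

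For (4), $\Lo1$ is the fixed locus of $\mu_2$ acting by $-1$ on $\xi_2,\xi_3$, so $\Xb$ is locally smooth modulo $\mu_2$ with normal directions $(\xi_2,\xi_3)$ (or $\xi_3,\xi_5$ after solving the relation), which gives the $A_1$ structure. On $E_1$ the parameter $t$ with $t^2\sim\xi_3^2$ has odd weight. Computing as in the proof of \cref{thm:local}(2), along an arc with $\xi_{2},\xi_3=O(t^3)$ and the even-weight coordinates scaled by $t^{-w_i}$, we get $v(\mm)$ odd, and the symbol reduces to $\bigl(-6(\xi_0\xi_1+3\xi_4),\,t\bigr)$. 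The residue is therefore $-6(\xi_0\xi_1+3\xi_4)$. It is nontrivial geometrically because on $\overline{\Lo1}$ this function has a simple zero along $\overline{\Lo4}$ (\cref{tab:strata}), so it is not a square in the function field.
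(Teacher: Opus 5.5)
Your outline is sound, and for (2), (3) and (4) it follows the paper's argument. The shared steps are:
\begin{itemize}
\item in (2), the normalisation $\sigma/u^2$ in $\overline\Q(\sigma/u^2,w/u^2)$;
\item in (3), the tame symbol with $v_E(\mm)=1$ and $v_E(-2\xi_0)=0$, followed by the conic-bundle step;
\item in (4), the computation of \cref{thm:local}(2) transplanted to the generic point of $E_1$ (the paper does this with $s=\xi_2^2/\xi_0^3$, $v_{E_1}(s)=1$, and the weight-one function $\xi_2/\xi_0$).
\end{itemize}

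The genuine difference is in (1). The paper evaluates the full Hessian of $F$ on the five-dimensional cone at one sample point, then concludes by semicontinuity and a tangent-space count. You instead use $2\xi_0F=\mm n-h^2$ in the coordinates $(\xi_1,\xi_3,\mm,h)$. In these coordinates $Z$ is a coordinate subspace, so $n\in(\mm,h)$ and everything reduces to one binary discriminant.

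This works, and it needs no sample point. For $\xi_0=1$ one gets $n=\xi_3h+(\tfrac12\xi_1-\xi_3^2)\mm-\mm^2$, hence $2F=-h^2+\xi_3\mm h+(\tfrac12\xi_1-\xi_3^2)\mm^2-\mm^3$. Its normal quadratic part has discriminant $2\xi_1-3\xi_3^2$. On $Z$, where $\xi_3^2=\tfrac13\xi_1+\xi_4$, this becomes $\xi_1-3\xi_4$, homogeneously $(\xi_0\xi_1-3\xi_4)/\xi_0^2$, which equals $-8w/\sigma$ on \cref{eq:L2par}. So the $A_1$ structure holds wherever $\sigma w\ne0$, not just generically. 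Also, since $\mm$ is one of your normal coordinates, $v_E(\mm)=1$ in (3) is automatic, and your fallback for an even $v_E(\mm)$ is unnecessary.

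The step in (4) that is not justified as written is the geometric nontriviality. \cref{tab:strata} gives only set-theoretic equations. On $\overline{\Lo1}$ the relation reads $\xi_5^2=\tfrac1{108}(\xi_0\xi_1-6\xi_4)(\xi_0\xi_1+3\xi_4)^2$. So $\overline{\Lo1}$ is not normal exactly along $\overline{\Lo4}$, where the two branches $\xi_5=\pm(\cdots)(\xi_0\xi_1+3\xi_4)$ cross. ``Simple zero along $\overline{\Lo4}$'' therefore has to be checked on the normalisation, and that computation is missing. The fix:
\begin{itemize}
\item Use the weight-zero coordinates $r=\xi_1/\xi_0$, $c=\xi_4/\xi_0^2$, $d=\xi_5/\xi_0^3$, and put $\eta=6d/(r+3c)$. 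Then $r=3\eta^2+6c$, so $\overline\Q(\overline{\Lo1})=\overline\Q(\eta,c)$.
\item The residue is the class of $-2(\eta^2+3c)$. It vanishes to order one along the irreducible curve $\eta^2+3c=0$, so it is not a square.
\end{itemize}

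The same restricted relation supplies two facts you only assert. First, the zero locus is $\overline{\Lo4}$, because $\xi_0\xi_1+3\xi_4=0$ forces $\xi_5=0$. Second, $\xi_5\ne0$ at the generic point of $\Lo1$, which is why the cover $\xi_5^2=F(1,\xi_1,\dots,\xi_4)$ is smooth there. Your alternative normal directions $(\xi_3,\xi_5)$ are wrong: $\xi_5$ has even weight and is $\mu_2$-invariant. You also need to repeat the conic-bundle argument for $E_1\to\overline{\Lo1}$, as you did in (3).

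Finally, state the arc computation at the generic point of $E_1$. With the weight-one function $t=\xi_0^2/\xi_2$, the representative $(\xi_i/t^{w_i})$ has $v_{E_1}$-valuations $1,1,2,2,2,3$. These are exactly the hypotheses of \cref{thm:local}(2), with $p$ replaced by $\xi_2^2/\xi_0^3$. The essential point, left implicit in your sketch, is that $-2\xi_0/t^2$ has odd valuation, not only $\mm/t^6$ (whose valuation is $3$ because $\xi_0\xi_1+3\xi_4\not\equiv0$ on $\overline{\Lo1}$). Otherwise the residue would be $-2\xi_0$ rather than $-6(\xi_0\xi_1+3\xi_4)$.
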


\begin{proof}
(1) On the parametrisation \cref{eq:L2par} one computes $F=0$ and $\nabla F=0$; the Hessian of $F$ on the affine cone
has rank two at the point $(s,t,u,v)=(3,5,7,-2)$ of \cref{eq:L2par}, in exact
arithmetic, hence at the generic point of $\Lo2$ by semicontinuity, and rank at most two everywhere on $\Lo2$,
since $\nabla F$ vanishes along $\Lo2$, and it vanishes on the tangent space of the cone over $\Lo2$, which has
dimension three, so its restriction to a normal plane is nondegenerate. (2) $u=-3\xi_3/\xi_0$ is a rational function
of weight one on $\Lo2$, and $-2\xi_0/u^2=\sigma/u^2$ is one of the two coordinates of the function field
$\overline\Q(\sigma/u^2,\,w/u^2)$ of the weighted plane $\P(2,1,2)$, hence not a square. (3) In the chart $t=st'$,
$y=sy'$ of the blow-up of $y^2=q(s,t)$ the exceptional divisor is $s=0$; $\mm$ vanishes to order one along $\Lo2$ in
the normal directions (its differential at the point of (1),
$(-\tfrac{27}4,\tfrac{27}4,0,-21,-\tfrac92)$, vanishes on the tangent space of the cone over $\Lo2$ but not on the
normal plane), so $v_E(\mm)=1$ while $-2\xi_0$ is a unit, and the tame
symbol of $(-\mm,-2\xi_0)$ along $E$ is $-2\xi_0|_E$, pulled back from $\Lo2$. Since $E\to\Lo2$ is a conic
bundle with geometrically integral fibres, $k(\Lo2)$ is algebraically closed in $k(E)$, and by (2) the class of
$-2\xi_0$ is nontrivial in $k(E)^\times/k(E)^{\times2}$.
(4) The chart $\xi_0\ne0$ of $\P(2,2,3,3,4,6)$ is the quotient of $\mathbb A^5$, with coordinates
$\xi_1,\dots,\xi_5$ and $\xi_0=1$, by $\mu_2$ acting by $-1$ on $\xi_2,\xi_3$, and $\Xb$ is there the quotient of
the hypersurface $\widetilde X=\{\xi_5^2=F(1,\xi_1,\dots,\xi_4)\}$. The fixed locus $\{\xi_2=\xi_3=0\}\cap\widetilde X$
maps onto $\overline{\Lo1}\cap\{\xi_0\ne0\}$, and $\widetilde X$ is smooth at its generic point, where $\xi_5\ne0$. So
at the generic point of $\Lo1$, $\Xb$ is étale locally $\Lo1\times\mathbb A^2/\{\pm1\}$. The weight-zero functions
$s=\xi_2^2/\xi_0^3$, $\xi_2\xi_3/\xi_0^3$, $\xi_3^2/\xi_0^3$ generate the ideal of $\overline{\Lo1}$ there,
$v_{E_1}(s)=1$, and $\mm/\xi_0^3$ is a unit along $E_1$, since $\mm|_{\Lo1}=\tfrac13\xi_0^2\xi_1+\xi_0\xi_4$ is not
identically zero. With the weight-one function $t=\xi_2/\xi_0$,
\[
\beta_K=\Bigl(-\frac{\mm}{t^6},-\frac{2\xi_0}{t^2}\Bigr)=\Bigl(-\frac{\mm}{\xi_0^3}s^{-3},-2s^{-1}\Bigr)
=\Bigl(-\frac{\mm}{\xi_0^3},-2\Bigr)+\Bigl(s,-\frac{2\mm}{\xi_0^3}\Bigr),
\]
by bilinearity and $(s,s)=(s,-1)$. The first symbol is unramified along $E_1$, so the residue of $\beta_K$ along
$E_1$ is the class of $-2\mm/\xi_0^3=-\tfrac23(\xi_0\xi_1+3\xi_4)/\xi_0^2$, pulled back from $\overline{\Lo1}$, which
is the class of $-6(\xi_0\xi_1+3\xi_4)$. On $\overline{\Lo1}$ the relation \cref{eq:rel} reads
$\xi_5^2=\tfrac1{108}(\xi_0\xi_1-6\xi_4)(\xi_0\xi_1+3\xi_4)^2$; in the weight-zero coordinates $r=\xi_1/\xi_0$,
$c=\xi_4/\xi_0^2$, $d=\xi_5/\xi_0^3$ and $h=6d/(r+3c)$ one has $r=3h^2+6c$, so the function field of
$\overline{\Lo1}$ is $\overline\Q(h,c)$ and the residue is the class of $-2(h^2+3c)$, which is not a square. Since
$E_1\to\overline{\Lo1}$ is a conic bundle with geometrically integral fibres, the class stays nontrivial in
$k(E_1)^\times/k(E_1)^{\times2}$. Finally $\xi_0\xi_1+3\xi_4=0$ forces $\xi_5=0$ on $\overline{\Lo1}$, so the
zero locus is $\{\xi_2=\xi_3=\xi_5=0,\ \xi_0\xi_1+3\xi_4=0\}=\overline{\Lo4}$.
\end{proof}

\begin{rem}
Together, \cref{thm:unram,thm:local,thm:L2geom} show the same pattern as for genus two: the
obstruction lives above the loci of objects with an extra involution, and the residue character is a square
class on that locus; for $\M_2$ it is the minor $-m_{12}$ of Mestre's conic restricted to the bielliptic locus,
here it is the discriminant $-2\xi_0$ of the divergence form above $\Lo2$ and the character $-6(\xi_0\xi_1+3\xi_4)$
above $\Lo1$, each vanishing exactly on the closure of the $V_4$-locus ($\Lo5$, respectively $\Lo4$) contained in
the stratum.
The difference is that here these loci have codimension two and the ramification is only visible after a
blow-up. The points of $\Lo1$ themselves are fine: over $\Lo1$ the map $\Rat_3\to\M_3$ is not a $\PGL_2$-torsor,
and the obstruction of \cref{thm:criterion} does not apply.
\end{rem}

\label{sec:counting}

\cref{thm:L2geom}(1) has a counting consequence. A rational point of $\Xb$ is a point $x$ of $\P(2,2,3,3,4)$ at which
$F(x)$ is a square, and the $p$-adic density of such points near $x$ is governed by the number of solutions of
$y^2\equiv F(x)\pmod{p^k}$. If $x\bmod p$ lies on $\Lo2$, which happens with probability $\asymp p^{-2}$ for the
points of the box, then $p^2\mid F(x)$ and the number of square roots is multiplied by $p$, since $y=py'$ with
$y'^2=q(s',t')$ and $q$ nondegenerate. Hence a rational point of $\Xb$ reduces into $\Lo2$ modulo $p$ with probability
\[
\Pr\bigl(P\bmod p\in Z\bigr)\approx\frac{\tau_p}p,\qquad \tau_p\to1,
\]
as if $\Lo2$ were a divisor. Given this, $v_p(\mm)$ is odd with probability $1-O(1/p)$, and by
\cref{thm:L2geom}(2) and the Lang--Weil estimate on $\Lo2$ the residue character $\bigl(\frac{-2\xi_0}p\bigr)$ takes the
values $\pm1$ with probability $\tfrac12+O(p^{-1/2})$. So each prime $p\ge5$ obstructs with probability
$\approx1/(2p)$, essentially only through the event $P\bmod p\in Z$: the other route of \cref{thm:local}, through
$\overline{\Lo1}$, requires $p\mid\xi_0,\xi_1$, $p^2\mid\xi_2,\xi_3,\xi_4$ and $p^3\mid\xi_5$ for the canonical
representative, and the points of height at most $B$ with this property are expected to number $\asymp B^8/p^5$
(the box contains $\asymp B^{14}/p^8$ such tuples $(\xi_0,\dots,\xi_4)$, on which $F$ is $p^6$ times a number of
size $B^{12}/p^6$), a negligible proportion. Writing $\omega(P)$ for the number of primes $p\ge5$ at which $G(P)$ has rank at most one modulo $p$
(for the primes not dividing every coordinate, the number of primes with $P\bmod p\in Z$), and $q_p$ for the
probability that such a prime obstructs, which by the above tends to $\tfrac12$, the probabilistic model
of \cite{2026-stats} becomes
\begin{equation}\label{eq:model}
\Pr(P\text{ fine})\approx c_0(P)\prod_{p\ge5,\ \operatorname{rank}(G(P)\bmod p)\le1}(1-q_p)\approx c_0(P)\,2^{-\omega(P)},
\end{equation}
with $c_0(P)$ accounting for $\infty$, $2$, $3$ and the product formula. If, as the independence of the events
$P\bmod p\in Z$ suggests, $\omega(P)$ is distributed over the points of height at most $B$ like a Poisson variable
of mean $\lambda=\log\log B+O(1)$, then the average of $2^{-\omega}$ is $e^{-\lambda/2}\asymp(\log B)^{-1/2}$.

\begin{conj}\label{conj:density}
Let $N(B)$ and $\Nf(B)$ be the numbers of points, respectively fine points, of $\M_3(\Q)$ of height at most $B$. Then
$\Nf(B)/N(B)\to0$; more precisely $\Nf(B)/N(B)\asymp(\log B)^{-1/2}$.
\end{conj}

The same enhancement explains why the number of points itself carries a logarithm. The expected number of
rational points is a sum over the box of $\prod_p\sigma_p(x)/\sqrt{F(x)}$, and the average of the local factor
$\sigma_p$ is $1+\tau_p/p+O(p^{-2})$ because of the points near $\Lo2$; the product over $p$ diverges like
$\log B$. We therefore expect $N(B)\asymp B^8\log B$, and the census is consistent with this, although
the range $2\le B\le7$ cannot distinguish a power of $\log B$: $N(B)/(B^8\log B)$ equals $19.7$,
$17.3$, $19.8$, $20.2$, $19.1$ for $B=3,4,5,6,7$ (\cref{tab:census}). We note two heuristic lower bounds. Rational maps of naive height at most
$T$ have invariants $|\xi_i|\ll T^{w_i}$, hence moduli height $\ll T$, and there should be $\asymp T^8$ conjugacy classes
among them (the database of \cite{2024-4-arxiv} has $4\,785\,584$ classes among $20\,548\,816$ maps with $T=4$); so
$\Nf(B)\gg B^8$ and, if $N(B)\asymp B^8\log B$, the fine proportion is at least of order $(\log B)^{-1}$. The
conjecture asserts that the truth is halfway between.

A rigorous proof would require a sieve on the rational points of the double cover $\Xb$, for which, unlike for the
weighted projective space underlying $\M_2$, not even the order of growth of $N(B)$ is known.

\section{The census}
\label{sec:census}

The census consists of all points of $\M_3(\Q)$ of weighted moduli height at most $7$, each
given by its canonical representative and decorated with its automorphism stratum, its height, the invariant
$\mm$, the obstruction set $S_P$ of $\beta(P)$ and the set of primes $p\ge5$ at which $G(P)$ has rank at most one
modulo $p$. It is computed directly in the coordinates $\xi$: for every $(\xi_0,\xi_1,\xi_2,\xi_3)$ in the box, with
$\xi_2>0$ or $\xi_2=0\le\xi_3$ (one representative of each orbit of $\lambda=-1$, which changes the signs of $\xi_2,\xi_3$),
the polynomial $216F$ is a cubic in $\xi_4$ with leading coefficient $-108$, evaluated along $|\xi_4|\le B^4$ in
$64$-bit (for the coefficients) and $128$-bit (for the value) integer arithmetic, and a value is kept when it is
$216$ times a perfect square $\xi_5^2\le B^{12}$, detected by a quadratic-residue filter modulo $64$ followed by an exact
integer square root. Each solution is then tested for $I_6\ne0$ and for being canonical (\cref{lem:ratrep}), and
classified. Primitivity only has to be tested at primes $p\le\min_i|\xi_i|^{g/w_i}$, the minimum over the nonzero
coordinates, since $p^{w_i/g}\mid\xi_i\ne0$ forces this; for the points of trivial support gcd $g=1$ this means
$p\le B$, and in general $p\le B^g$.

The work is $\approx16B^{14}$ evaluations of a cubic, $1.1\cdot10^{13}$ for $B=7$. The box is split into slices of values
of $\xi_0$, which are processed independently; on a laptop with $22$ logical processors, running $21$ slices in
parallel, the census takes one hour and fifty minutes. The search finds $215\,382\,151$ tuples, of which $388\,307$ have
$I_6=0$ and $733\,556$ are not canonical, leaving the \Nseven{} points of the census.
This is to be compared with the genus-two census \cite{2026-stats}, where every weighted-primitive tuple is a point
and the cost lies in deciding the obstruction, which requires factoring $J_{30}$. Here the situation is reversed:
finding the points is the expensive step, a search for squares among the values of a cubic, while the
obstruction costs almost nothing, since by \cref{thm:criterion} it is a Hilbert symbol of two numbers, $\xi_0$ of size
$B^2$ and $3\mm$ of size $7B^6$, which are factored by trial division.

The enumeration and the classification were written twice, in C and in Python, and agree on all $142\,344$
points of height at most $3$. Independent runs to heights $5$ and $6$, on a different platform, agree with the census
on their range: the same $12\,438\,121$ canonical representatives of height at most $5$ (compared by a checksum of the
point set), and at every bound $B\le6$ of \cref{tab:census} the same numbers of points, of fine points, of points with
automorphisms, of pseudo-real points and of points of $\Lo2$. All
algebraic identities used (\cref{lem:gram}, \cref{eq:I6}, the strata) were checked symbolically, and the criterion
itself was checked as described in \cref{rem:checks}: every one of the $4\,785\,584$ classes of maps over $\Q$ of
naive height at most $4$ is fine, and every one of $14\,225$ fine points of the census sampled in \cref{sec:models} was
turned into an explicit map over $\Q$ with the right invariants. The product formula is a further check: every
obstruction set of the \Nseven{} points of height at most $7$ has even cardinality (the classification program stops
at the first odd one).

\begin{table}[ht]
\centering
\caption{The census of $\M_3(\Q)$ by weighted moduli height. $\delta=\Nf/N$; $\delta_{\rm ns}$ is the fine proportion among
the points of $U$ not on a split sheet ($-2\xi_0\notin\Q^{\times2}\cup\{0\}$); $N_{\rm sym}$ counts points with nontrivial
automorphism group.}
\label{tab:census}
\begin{tabular}{rrrrrrr}
\toprule
$B$ & $N(B)$ & $\Nf(B)$ & $\delta$ & $\delta_{\rm ns}$ & $N_{\rm sym}$ & $N/(B^8\log B)$\\
\midrule
$1$ & $9$ & $9$ & $1.000$ & --- & $5$ & --- \\
$1.5$ & $248$ & $232$ & $0.935$ & --- & $30$ & $23.9$ \\
$2$ & $3\,539$ & $3\,113$ & $0.880$ & $0.768$ & $182$ & $19.9$ \\
$2.5$ & $29\,152$ & $23\,434$ & $0.804$ & $0.713$ & $618$ & $20.9$ \\
$3$ & $142\,344$ & $111\,414$ & $0.783$ & $0.677$ & $1\,692$ & $19.7$ \\
$3.5$ & $529\,306$ & $389\,017$ & $0.735$ & $0.647$ & $3\,790$ & $18.8$ \\
$4$ & $1\,567\,962$ & $1\,119\,913$ & $0.714$ & $0.635$ & $7\,983$ & $17.3$ \\
$4.25$ & $3\,207\,191$ & $2\,431\,162$ & $0.758$ & $0.620$ & $11\,355$ & $20.8$ \\
$4.5$ & $4\,872\,512$ & $3\,657\,268$ & $0.751$ & $0.612$ & $14\,600$ & $19.3$ \\
$5$ & $12\,438\,121$ & $9\,215\,604$ & $0.741$ & $0.628$ & $27\,735$ & $19.8$ \\
$5.5$ & $28\,316\,999$ & $20\,330\,721$ & $0.718$ & $0.617$ & $47\,731$ & $19.8$ \\
$6$ & $60\,661\,421$ & $42\,804\,136$ & $0.706$ & $0.605$ & $78\,762$ & $20.2$ \\
$6.5$ & $115\,643\,256$ & $79\,697\,105$ & $0.689$ & $0.596$ & $123\,172$ & $19.4$ \\
$7$ & $214\,260\,288$ & $145\,622\,093$ & $0.680$ & $0.594$ & $186\,530$ & $19.1$ \\
\bottomrule
\end{tabular}
\end{table}

\cref{tab:census} gives the counts. The number of points grows like $B^8\log B$ in this range, as predicted in
\cref{sec:counting}: $N(B)/(B^8\log B)$ stays between $17$ and $21$ for $2\le B\le7$; the number of points with nontrivial automorphisms is a negligible fraction ($0.22\%$ at
$B=5$ and $0.09\%$ at $B=7$), as in genus two \cite{2026-stats}, though not as small, since the automorphism loci of $\M_3$ have dimension
two. The fine proportion falls, from $1$ at height $1$ to $0.71$ at height $4$, rises to $0.76$ at height
$4.25$, and falls again, to $0.680$ at height $7$; the explanation of the jump is the subject of the next paragraph.

The symbol $(-\mm,-2\xi_0)$ is trivial whenever $-2\xi_0$ is a square, and for a canonical representative this
happens on the sheets $\xi_0=0,-2,-8,-18,-32,\dots$, on which the divergence form $J_\phi$ has rational roots. These
sheets are heavily overrepresented among points of small height: at $B=7$ the five sheets $\xi_0\in\{0,-2,-8,-18,-32\}$
contain $45\,121\,110$ of the $214\,260\,288$ points ($21.1\%$), and the sheet $\xi_0=-18$ alone $24\,185\,625$, thirteen times the
average non-split sheet. Two effects combine. On the sheet $\xi_0=0$ every point of $U$ is fine by
\cref{thm:criterion}(2), and on a split sheet with $\xi_0\ne0$ the identity \cref{eq:sylv2} becomes
$(h-s\xi_5)(h+s\xi_5)=\mm\,n$ with $s^2=-2\xi_0$, so the double cover acquires the rational threefold
$\{\mm=0\}$, along which $F=h^2/s^2$ is a square identically; of the $8\,609\,849$ points of the census with $\mm=0$,
$8\,592\,638$ lie on split sheets.
And the sheet $\xi_0=-18$ is favoured by the local densities at $2$ and $3$ of the integrality of $F$, whose
denominators divide $216$: all multiples of $6$ are overrepresented, by factors $1.8$ to $3.5$ against the average
non-split sheet. The sheet $\xi_0=-18$ enters the census at height $\sqrt{18}=4.24$, which is the jump in
\cref{tab:census}; the next one, $\xi_0=-32$, enters at $\sqrt{32}=5.66$, where $\delta$ rises again slightly, from
$0.7180$ at $B=5.5$ to $0.7185$ at $B=5.75$. Since the number of split
sheets in the box is $\asymp B$ against $\asymp B^2$ sheets in all, their share should tend to zero; in the census it
falls from $30.2\%$ at $B=5$ to $25.4\%$ at $B=6$ and $21.1\%$ at $B=7$. The fine
proportion away from them, $\delta_{\rm ns}$, is the quantity that measures the obstruction; it decreases,
from $0.77$ at $B=2$ to $0.63$ at $B=5$ and $0.594$ at $B=7$.

\section{Statistics of the obstruction}
\label{sec:statistics}

We restrict to the $214\,073\,758$ points of $U$ of height at most $7$, of which $68\,625\,577$ are coarse.

\begin{figure}[ht]
\centering
\includegraphics[width=0.5\textwidth]{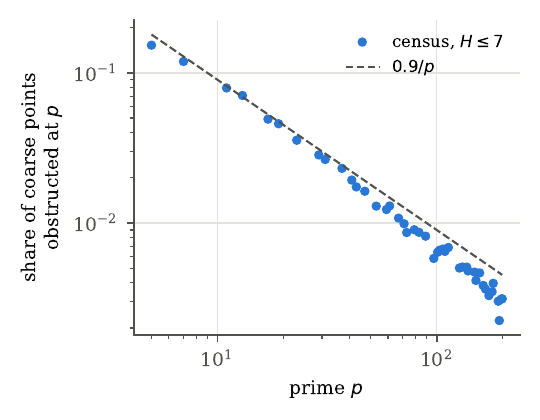}
\caption{Share of the coarse points of $U$ with $H\le7$ obstructed at the prime $p$, with the line $0.9/p$.}
\label{fig:places}
\end{figure}

\subsection{Obstructed places}
Every obstruction set has two, four or six elements ($67\,859\,512$, $765\,840$ and $225$ points). \cref{tab:places}
gives the proportion of coarse points obstructed at the small places and \cref{tab:patterns} the shape of the obstruction
sets. The real place obstructs $3.8\%$ of the coarse points, exactly the $2\,605\,278$ points at which $\xi_0>0$ and
$\mm>0$ (the pseudo-real points); the prime $2$ obstructs $76\%$; and the primes $p\ge5$ obstruct with a frequency
close to $0.9/p$ for small $p$, and somewhat below it for $p\gtrsim50$, where the box is small compared with $p^2$
(\cref{fig:places}). The shape of $S_P$ is the most instructive statistic: $66\%$ of the coarse points are
obstructed at $2$ and at a single prime $p\ge5$, $5\%$ at $3$ and a single $p\ge5$, $1.5\%$ at $\infty$ and a single
$p\ge5$; $8.7\%$ are obstructed only at places in $\{\infty,2,3\}$, and $17\%$ at two primes $p,q\ge5$ and nowhere
else. A point with one obstructing large prime must, by the product formula, be obstructed at a second place, and
since a point rarely reduces into $\Lo2$ at two primes, that second place is almost always $\infty$, $2$ or $3$. In all,
$82.9\%$ of the coarse points are obstructed at one of the places $\infty$, $2$, $3$: the $2$-adic invariant, a locally
constant function of $P$ in the $2$-adic topology, detects obstructions at primes that vary from point to point and are
individually invisible.

\begin{table}[ht]
\centering
\caption{Proportion of the $68\,625\,577$ coarse points of $U$ with $H\le7$ obstructed at a given place.}
\label{tab:places}
\begin{tabular}{lccccccccc}
\toprule
place & $\infty$ & $2$ & $3$ & $5$ & $7$ & $11$ & $13$ & $17$ & $19$\\
\midrule
\% & $3.80$ & $75.91$ & $12.44$ & $15.30$ & $11.91$ & $7.94$ & $7.07$ & $4.92$ & $4.59$\\
\bottomrule
\end{tabular}
\end{table}

\begin{table}[ht]
\centering
\caption{Obstruction sets of the coarse points of $U$ with $H\le7$: the small places in $S_P$ and the number of
primes $p\ge5$ in $S_P$.}
\label{tab:patterns}
\begin{tabular}{lrr}
\toprule
$S_P\cap\{\infty,2,3\}$, $\#(S_P\cap\{p\ge5\})$ & points & \% \\
\midrule
$\{2\}$, $1$ & $45\,534\,304$ & $66.4$\\
$\emptyset$, $2$ & $11\,749\,105$ & $17.1$\\
$\{2,3\}$, $0$ & $4\,518\,513$ & $6.6$\\
$\{3\}$, $1$ & $3\,563\,668$ & $5.2$\\
$\{\infty,2\}$, $0$ & $1\,313\,031$ & $1.9$\\
$\{\infty\}$, $1$ & $1\,052\,278$ & $1.5$\\
other & $894\,678$ & $1.3$\\
\bottomrule
\end{tabular}
\end{table}

\subsection{The mechanism prime by prime}
By \cref{thm:local}, a prime $p\ge5$ can obstruct only if $G(P)$ has rank at most one modulo $p$, that is, only if
$P\bmod p\in Z$ or $p$ divides every coordinate of the canonical representative. In the census all
$75\,546\,209$ pairs $(P,p)$ with $p\in S_P$, $p\ge5$, satisfy this ($12\,218$ of them through the second
alternative), and both statements of \cref{thm:local}(3) hold without exception, on $324\,819\,419$ and
$173\,894\,528$ pairs. Let $\omega(P)$ be the number of primes $p\ge5$ at which $G(P)$ has rank at most one modulo
$p$; for all but at most $289\,361$ of the points of $U$ in the census this is the number of primes with $P\bmod p\in Z$.
\cref{tab:omega} shows the fine proportion as a
function of $\omega$: it is multiplied by a factor $0.60$--$0.68$ for each of the first three additional primes, as
\cref{eq:model} predicts with $q_p\approx0.3$--$0.4$, and by $0.85$ from $\omega=3$ to $\omega=4$. The mean of $\omega$ grows with the height, from $0.23$ for $H\le2$ to $0.48$, $0.67$,
$0.77$, $0.86$ and $0.94$ on the shells $(2,3]$, $(3,4]$, $(4,5]$, $(5,6]$, $(6,7]$.

\begin{table}[ht]
\centering
\caption{Fine proportion as a function of $\omega(P)=\#\{p\ge5: \operatorname{rank}(G(P)\bmod p)\le1\}$, points of $U$ with $H\le7$. (There are
also $60$ points with $\omega=5$, one of them fine.)}
\label{tab:omega}
\begin{tabular}{lrrrrr}
\toprule
$\omega$ & $0$ & $1$ & $2$ & $3$ & $4$\\
\midrule
points & $71\,756\,177$ & $95\,516\,814$ & $41\,552\,044$ & $5\,125\,243$ & $123\,420$\\
fine proportion & $0.957$ & $0.626$ & $0.376$ & $0.254$ & $0.215$\\
\bottomrule
\end{tabular}
\end{table}

In the census, and in the $p$-adic computation below, we measure the event $P\bmod p\in Z$ by the rank condition
$\operatorname{rank}(G(P)\bmod p)\le1$, which differs from it only at the primes dividing every coordinate. The
probability $q_p$ that a prime satisfying the rank condition obstructs, and the frequency $\tau_p/p$ of the rank
condition, can be predicted from $p$-adic densities: a point $x\in\Z_p^5$ is weighted by the local number of
square roots of $F(x)$, $(1+\chi(u))p^{v/2}$ if $F(x)=p^vu$ with $v$ even and $0$ otherwise, and one integrates the
indicator of the rank condition $\operatorname{rank}(G(x)\bmod p)\le1$ and of $\inv_p\ne0$, which is computed from the
Hilbert symbol. \cref{tab:qp} compares a Monte-Carlo
evaluation of these integrals ($4\cdot10^6$ samples modulo $p^8$, or $p^6$ for $p\ge17$) with the census. The prediction
agrees with \cref{sec:counting}: $\tau_p=1$ within the sampling error, so that the codimension-two locus $Z$ is met with
the frequency of a divisor, and $q_p$ increases towards $\tfrac12$, roughly as $\tfrac12-0.8/p$; and in the sample no
point outside the tube defined by the rank condition is obstructed. The census agrees qualitatively; its values of $\tau_p$ are smaller
($0.65$--$0.78$ on the shell $6<H\le7$), but for each of these primes they increase from shell to shell (for $p=5$:
$0.716$, $0.745$, $0.755$, $0.780$ on the shells $H\le4$, $4<H\le5$, $5<H\le6$, $6<H\le7$), which is what one expects from a box
whose sides $B^2=49$ in $\xi_0,\xi_1$ are comparable to $p^2$; its values of $q_p$ are smaller on all points, because on
the split sheets $q_p=0$, and larger on the points off the split sheets.

\begin{table}[ht]
\centering
\caption{The probability that $G(P)$ has rank at most one modulo $p$ (as $\tau_p=p\cdot\Pr$) and the probability $q_p$ that $p$
then obstructs: $p$-adic prediction and census (shell $6<H\le7$; all points of $U$, and points off the split sheets).}
\label{tab:qp}
\begin{tabular}{rcccccc}
\toprule
 & \multicolumn{2}{c}{prediction} & \multicolumn{2}{c}{census, all} & \multicolumn{2}{c}{census, non-split}\\
$p$ & $\tau_p$ & $q_p$ & $\tau_p$ & $q_p$ & $\tau_p$ & $q_p$\\
\midrule
$5$ & $1.016$ & $0.335$ & $0.780$ & $0.312$ & $0.784$ & $0.385$ \\
$7$ & $1.007$ & $0.373$ & $0.750$ & $0.365$ & $0.756$ & $0.449$ \\
$11$ & $0.988$ & $0.418$ & $0.720$ & $0.402$ & $0.731$ & $0.492$ \\
$13$ & $0.967$ & $0.420$ & $0.702$ & $0.439$ & $0.714$ & $0.535$ \\
$17$ & $0.986$ & $0.443$ & $0.684$ & $0.411$ & $0.697$ & $0.500$ \\
$19$ & $0.983$ & $0.464$ & $0.677$ & $0.424$ & $0.700$ & $0.509$ \\
$23$ & $0.951$ & $0.461$ & $0.653$ & $0.414$ & $0.676$ & $0.497$ \\
$29$ & $1.008$ & $0.466$ & $0.645$ & $0.416$ & $0.662$ & $0.503$ \\
$31$ & $1.013$ & $0.469$ & $0.646$ & $0.419$ & $0.662$ & $0.507$ \\
\bottomrule
\end{tabular}
\end{table}

\subsection{The fine proportion and the height}
\cref{fig:height} shows the counting functions and the fine proportions. The overall proportion is dominated at
small height by the split sheets; away from them the proportion $\delta_{\rm ns}$ decreases from $0.77$ at $B=2$ to
$0.63$ at $B=5$ and $0.594$ at $B=7$, while the mean of $\omega$ increases. The range is far too short to test the exponent $\tfrac12$ of
\cref{conj:density}. At $B=5,6,7$ the product $\delta(B)(\log B)^{1/2}$ equals $0.940$, $0.945$, $0.948$, nearly
constant, and $\delta(B)\log B$ equals $1.19$, $1.26$, $1.32$, so that in this range the fine proportion decays more
slowly than $(\log B)^{-1}$; but $\delta_{\rm ns}(B)(\log B)^{1/2}$ still increases ($0.80$, $0.81$, $0.83$), and three
values of $B$ cannot determine an exponent. What the census shows, in its range, is the mechanism through
which the decay should happen, and the values $\tau_p\approx1$ and $q_p\to\tfrac12$ of its quantitative ingredients.

\begin{figure}[ht]
\centering
\includegraphics[width=\textwidth]{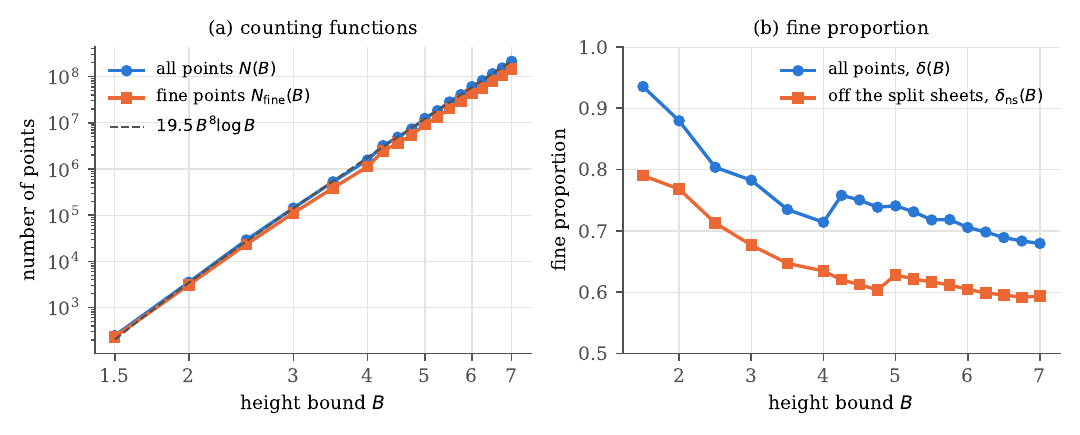}
\caption{(a) The counting functions of the census against $19.5\,B^8\log B$. (b) The fine proportion $\delta(B)$ and the fine
proportion $\delta_{\rm ns}(B)$ among the points of $U$ off the split sheets.}
\label{fig:height}
\end{figure}

\subsection{Dependence on the height}
\cref{tab:shells} shows the census shell by shell. The share of the coarse points obstructed at $2$ falls steadily, from
$96\%$ for $H\le2$ to $75\%$ on the shell $6<H\le7$, and the mean of $\omega$ grows from $0.23$ to $0.94$; on the last two
shells it grows by $0.09$ and $0.07$, comparable to the increments $0.11$ and $0.08$ of $\log\log B$, as the heuristic of
\cref{sec:counting} predicts, without proving anything about its growth. The fine proportion at fixed $\omega$ is nearly
constant for $\omega=0$. For $\omega\ge1$ it depends on the composition of the shell: it is largest on the shell
$4<H\le5$, in which the split sheet $\xi_0=-18$ enters (\cref{sec:census}), all of whose points are fine, and it
decreases over the last three shells ($0.663$, $0.631$, $0.623$ for $\omega=1$, and $0.311$, $0.273$, $0.249$ for
$\omega=3$). The pattern $\{p,q\}$ of two large primes, which reciprocity does not transfer to a small place, makes up
$17.8\%$ of the coarse points on the shell $6<H\le7$, and obstruction sets with six places occur there ($183$ points).

\begin{table}[ht]
\centering
\caption{The census of $U$ by height shell: number of points, fine proportion, mean of $\omega$, share of the coarse points
obstructed at $2$, and fine proportion as a function of $\omega$ (omitted for fewer than $100$ points).}
\label{tab:shells}
\begin{tabular}{lrccccccc}
\toprule
 & & & & & \multicolumn{4}{c}{fine proportion at $\omega=$}\\
shell & points & fine & mean $\omega$ & obstructed at $2$ & $0$ & $1$ & $2$ & $3$\\
\midrule
$H\le2$ & $3\,357$ & $0.876$ & $0.23$ & $96.1\%$ & $0.968$ & $0.569$ & --- & --- \\
$2<H\le3$ & $137\,295$ & $0.779$ & $0.48$ & $89.0\%$ & $0.954$ & $0.582$ & $0.287$ & --- \\
$3<H\le4$ & $1\,419\,327$ & $0.706$ & $0.67$ & $84.1\%$ & $0.946$ & $0.574$ & $0.295$ & $0.144$ \\
$4<H\le5$ & $10\,850\,407$ & $0.744$ & $0.77$ & $79.5\%$ & $0.961$ & $0.663$ & $0.430$ & $0.311$ \\
$5<H\le6$ & $48\,172\,273$ & $0.696$ & $0.86$ & $77.4\%$ & $0.958$ & $0.631$ & $0.385$ & $0.273$ \\
$6<H\le7$ & $153\,491\,099$ & $0.669$ & $0.94$ & $75.2\%$ & $0.956$ & $0.623$ & $0.371$ & $0.249$ \\
\bottomrule
\end{tabular}
\end{table}

\begin{figure}[ht]
\centering
\includegraphics[width=\textwidth]{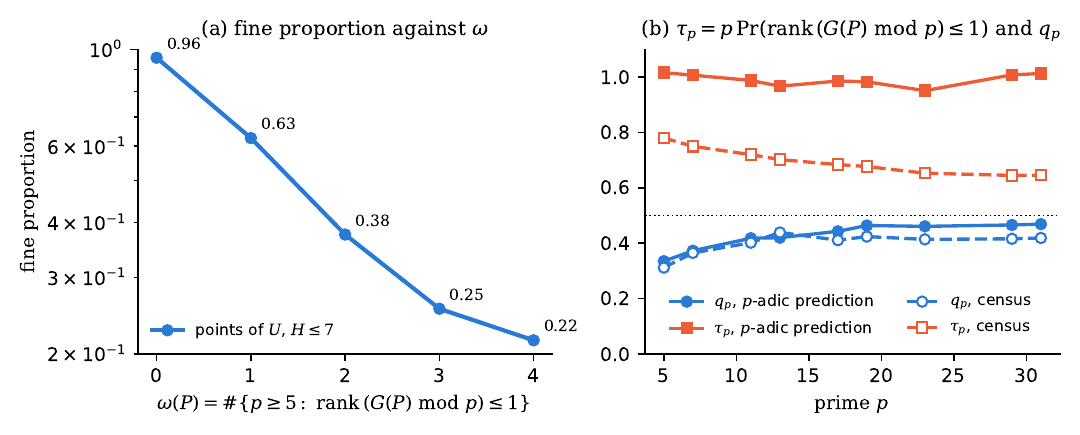}
\caption{(a) Fine proportion as a function of the number $\omega(P)$ of primes $p\ge5$ at which $G(P)$ has rank at most
one modulo $p$ (logarithmic scale), for the points of $U$ with $H\le7$. (b) The frequency $\tau_p$ of the rank condition and the probability $q_p$ that such a prime obstructs:
$p$-adic prediction (filled) and census shell $6<H\le7$ (hollow).}
\label{fig:mechanism}
\end{figure}

\subsection{Minimal fields of definition}
By \cref{cor:consequences}(4), the quadratic fields of definition of a coarse point are determined by $S_P$. Taking for
each coarse point of $U$ with $H\le7$ the squarefree $d$ of smallest absolute value with $\Q(\sqrt d)$ splitting
$(-\mm,-2\xi_0)$ (trying $d>0$ before $-d$), we find that $50.1\%$ of the coarse points have a model over $\Q(i)$,
$30.4\%$ over $\Q(\sqrt2)$, $10.2\%$ over $\Q(\sqrt3)$, $4.3\%$ over $\Q(\sqrt{-2})$ and $2.7\%$ over $\Q(\sqrt5)$;
$84.8\%$ need $|d|\le2$, $99.85\%$ need $|d|\le10$, and the largest value is $|d|=67$. The dominance of $\Q(i)$ reflects the dominance of the obstruction sets $\{2,p\}$: $-1$ is a non-square
at $2$ and at the $p\equiv3\pmod4$. As in genus two \cite{2026-stats}, coarse points are barely coarse.

\begin{figure}[ht]
\centering
\includegraphics[width=\textwidth]{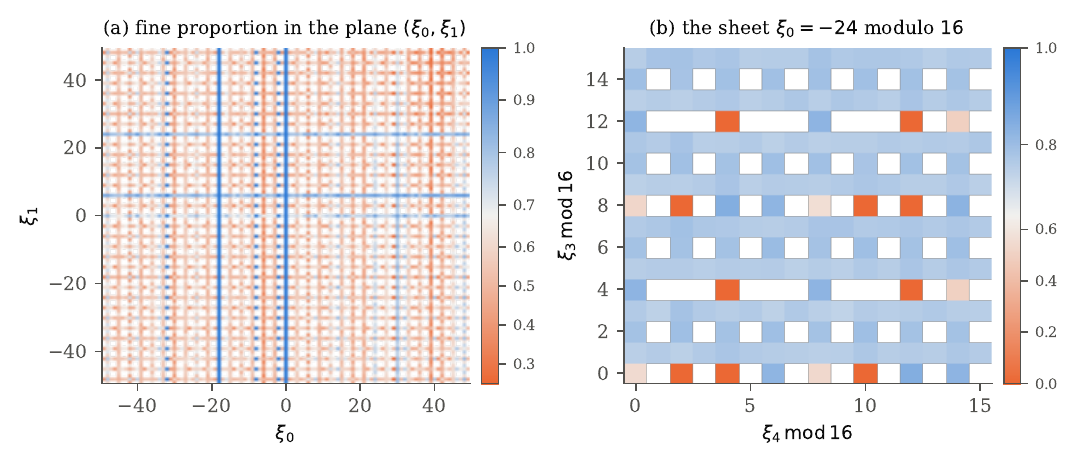}
\caption{(a) Fine proportion of the points of $U$ with $H\le7$ in each cell $(\xi_0,\xi_1)$ with at least $50$ points, on a
diverging scale centred at the overall proportion; the split sheets $\xi_0=0,-2,-8,-18,-32$ are the dark columns. (b) The sheet $\xi_0=-24$, aggregated by
$(\xi_3\bmod16,\ \xi_4\bmod16)$: the $2$-adic imprint of the obstruction (white: no points).}
\label{fig:slice}
\end{figure}

\subsection{A picture of the moduli space}
\cref{fig:slice} shows the fine proportion over two coordinate planes of the census. In the plane $(\xi_0,\xi_1)$ the
split sheets $\xi_0\in\{0,-2,-8,-18,-32\}$ are entirely fine and the region $\xi_0>0$ carries the pseudo-real points; within
a sheet, the fine proportion varies with congruence conditions on $\xi_1,\xi_3,\xi_4$ modulo powers of $2$ and $3$, which
are the $2$- and $3$-adic invariants of $(-\mm,-2\xi_0)$. The odd primes, which in the model \cref{eq:model} carry the
decay, leave no visible structure: their contribution depends on the reduction of the point modulo a prime that
varies from point to point.

\section{Minimal models}
\label{sec:models}

For a fine point $P$ the natural next object is a rational map over $\Q$ in the class $P$ of smallest naive height
$h(\phi)=\max|c_i|$ (with $c_0,\dots,c_7$ coprime integers),
\[
h_{\min}(P)=\min\{h(\phi):\ \phi\in\Rat_3(\Q),\ [\phi]=P\}.
\]
The database of \cite{2024-4-arxiv} contains every map with $h\le4$ and therefore determines $h_{\min}(P)$ whenever it is
at most $4$; it does so for $4\,785\,584$ classes. The moduli heights of these classes range from $1$ to $148$, with
median $45$ for $h_{\min}=4$, $34$ for $h_{\min}=3$, $24$ for $h_{\min}=2$ and $15$ for $h_{\min}=1$: a map of naive height $T$ has
invariants of size $T^{w_i}$ and moduli height of order $T$, with a large constant. Conversely, only $54\,359$ of the
$145\,622\,093$ fine points of the census, $0.04\%$, have $h_{\min}\le4$. To go beyond the database one needs to construct
models.

\begin{alg}[Small model of a fine point]\label{alg:small}
Input: a fine point $P\in U(\Q)$ with $\xi_0\mm\ne0$.
\begin{enumerate}
\item Construct a map $\phi_0$ over $\Q$ with invariants $\xi(P)$ by \cref{prop:explicit}; the conic is solved by
\texttt{qfsolve} \cite{pari}.
\item Minimise the resultant: at each prime $p$ dividing $\lambda^6I_6(P)$, where $\lambda$ is the scaling between the
invariants of $\phi_0$ and the canonical representative, move greedily in the Bruhat--Tits tree, conjugating by
$\left(\begin{smallmatrix}p&a\\0&1\end{smallmatrix}\right)$ and $\left(\begin{smallmatrix}1&0\\0&p\end{smallmatrix}\right)$, as long as
$|\Res|$ decreases. Since $\operatorname{ord}_p\Res$ is convex on the tree \cite{rumely-2015} and changes by multiples of
$6$, this reaches a minimal model at every prime \cite{bruin-molnar-2012}; only primes with $p^6\mid\Res$ need to be
visited.
\item Collect models of equal resultant: the minimal resultant locus can be a segment \cite{rumely-2015}, and we
take the models of equal resultant reached by at most three successive moves in the tree at the primes $p$ with
$p^2\mid\Res$. (A primitive matrix of determinant $p$ multiplies the resultant by $p^{12-6e}$, where $p^e$ is the
content of the conjugate, so equality needs $e=2$.) This is a truncation: the segment can be longer than three, as
for $\frac{z^3-p^{16}}{z}$, whose nine models $\frac{p^kz^3-p^{16-2k}}{z}$, $0\le k\le8$, all have resultant $-p^{16}$.
\item Reduce at infinity: for each of them, conjugate by the element of $\SL_2(\Z)$ that moves a covariant point into the
standard fundamental domain (the roots in the upper half plane of the definite covariant quadratics $J$, $(I,J)_2$,
$u_3$ and their sums, and of the fixed-point quartic $I$, computed with $60$ significant digits when the coefficients exceed $10^9$), descend greedily under
$z\mapsto z\pm1$, $-1/z$, $-z$, $z/(z\pm1)$, repeat until no covariant point moves, and search all words of length at most
$4$ in these generators. When all covariant quadratics are indefinite and all fixed points are real, use instead, when $I(z,1)$ has degree four, Julia's covariant point of $I$, the minimiser of $\sum_j t_j|z-\alpha_j|^2/\operatorname{Im}z$ over the roots $\alpha_j$ of $I(z,1)$ with $t_j=|\partial_zI(\alpha_j,1)|^{-1}$ \cite{hutz-stoll-2019}, and, when it has degree three, the analogous point formed from its three finite roots, as a heuristic; in both cases provided $I(z,1)$ has no repeated root (a fixed point of multiplier $1$) and the roots are numerically resolved, and otherwise keep the current model. We apply both.
\item Local search: from the best model so far, search breadth-first, to depth $3$, over the moves $z\mapsto z\pm1$, $-1/z$,
$-z$ and the conjugations by $\left(\begin{smallmatrix}p&a\\0&1\end{smallmatrix}\right)$ and
$\left(\begin{smallmatrix}1&0\\0&p\end{smallmatrix}\right)$ for $p=2,3$, and repeat while the naive height decreases.
\item Return the model of smallest naive height found.
\end{enumerate}
\end{alg}

Steps (3)--(5) are heuristic. Step (5) leaves the minimal resultant locus on purpose: a model of smallest naive height need not have minimal resultant.

\begin{exa}\label{ex:nonminimal}
The map $\phi(x)=\dfrac{x^3+2x^2-x+6}{x^2+2x+3}$ has $|\Res\phi|=132$, which is minimal at every prime since $p^6\nmid132$,
and its class is the fine point $[8:0:0:20:-2:2]$. Its conjugate by $\operatorname{diag}(2,1)$,
\[
\frac{\phi(2x)}{2}=\frac{4x^3+4x^2-x+3}{4x^2+4x+3},
\]
has resultant $2^6\cdot132$ but naive height $4$, which is $h_{\min}$ of the class by the database of \cite{2024-4-arxiv}; steps (1)--(4), which stay on the
minimal resultant locus, return a model of naive height $6$.
\end{exa}

The reduction step is a heuristic version of the reduction theory of \cite{hutz-stoll-2019}, and the
output is an upper bound $h_{\rm alg}(P)\ge h_{\min}(P)$. We tested it on the database of \cite{2024-4-arxiv}: for
random classes of the generic stratum with $\xi_0\mm\ne0$ and $h_{\min}=1,2,3,4$ (about $500$ of each, which for $h_{\min}=1$ is all of them), starting from the moduli point alone, the algorithm returns a model of
naive height exactly $h_{\min}$ in $100.0\%$ of the cases (\cref{tab:valid}). Steps (3)--(5) matter: with step (2) and a greedy reduction alone the success rate is $93\%$ for $h_{\min}=3,4$;
computing the covariant points in double precision fails for some models with coefficients of $15$ digits or more, which
the raw output of step (1) often has; without step (5) the algorithm misses $h_{\min}$ for classes such as the one of
\cref{ex:nonminimal}; and the class $[14:6:6:-64:-64:0]$, with minimal resultant $2116=46^2$, has a model of
naive height $4$, but it differs from the model of naive height $6$ found when step (3) is restricted to $p\le7$ by a
conjugation of determinant $46=2\cdot23$ that preserves the resultant. On a second test set, $5\,433$ census points with
$h_{\min}\le4$ and $\xi_0\mm\ne0$ in the generic stratum (drawn separately from $H\le5$ and from $5<H\le7$: in each range
all such points with $h_{\min}\le2$ and random samples of $1\,100$ points with $h_{\min}=3,4$, less those with
$\xi_0\mm=0$), the algorithm returns a model of naive height $h_{\min}$ in all cases but one. The exception is the class
$[-48:2:0:48:-48:-256]$, of height $48^{1/2}$: the algorithm returns $\frac{2x^3-3x^2+2x}{3x-2}$, of naive height $3$,
while the database contains $\frac{x^3-x^2+2x}{2x^2-x-2}$, of naive height $2$; the two have the same resultant $-64$, and
the second is the conjugate of the first by $x\mapsto\frac{2x}{x+2}$, of determinant $4$. Steps (3)--(5) are
heuristic, and $h_{\rm alg}$ can exceed $h_{\min}$.

\begin{table}[ht]
\centering
\caption{Validation of \cref{alg:small} on classes of the database of \cite{2024-4-arxiv} with known $h_{\min}\le4$.}
\label{tab:valid}
\begin{tabular}{lcccc}
\toprule
$h_{\min}$ & $1$ & $2$ & $3$ & $4$\\
\midrule
classes tested & $482$ & $500$ & $498$ & $496$ \\
exact $h_{\min}$ found & $100.0\%$ & $100.0\%$ & $100.0\%$ & $100.0\%$ \\
largest height returned & $1$ & $2$ & $3$ & $4$ \\
\bottomrule
\end{tabular}
\end{table}

\subsection{The cost of descent}
We ran \cref{alg:small} on a stratified random sample of $14\,225$ fine points of the census ($2\,500$ per
height shell, all $1\,725$ eligible points with $H\le2$), taking a median of $0.51$ seconds per point; every model was
checked to have the right invariants. Of the sampled points, $127$ are classes of the database of \cite{2024-4-arxiv}; for all of them the model found has naive height exactly $h_{\min}$. Compared with the variant of \cref{alg:small} in which step (3) is restricted to $p\le7$ and distance two and step (5) is omitted, the algorithm finds a smaller model for $25.4\%$ of the sample, by a factor of $1.45$ in the median. \cref{fig:models} and \cref{tab:hmin} show the result. The naive height $h_{\rm alg}$ of the model found is only
loosely tied to the moduli height: its median grows from $18$ for $H\le2$ to $72.5$ for $4<H\le5$, $103$ for $5<H\le6$
and $165.5$ for $6<H\le7$, while its upper tail is very long, and the correlation between $\log h_{\rm alg}$ and $\log H$
is only $0.40$. The size of a small model is
governed by arithmetic rather than by the height: the scaling $\lambda$ between the invariants of a model and the canonical
representative must absorb the weighted content of the invariants, and the minimal resultant is $\lambda^6I_6(P)$ for the
best $\lambda$.

\begin{table}[ht]
\centering
\caption{Naive height of the model found by \cref{alg:small} for fine points of the census, by height shell.}
\label{tab:hmin}
\begin{tabular}{lrrrrr}
\toprule
shell & points & median & $75\%$ & $90\%$ & share $\le10$\\
\midrule
$H\le2$ & $1725$ & $18$ & $45$ & $80$ & $31.1\%$ \\
$2<H\le3$ & $2500$ & $31$ & $88$ & $550$ & $14.9\%$ \\
$3<H\le4$ & $2500$ & $54$ & $239$ & $2310$ & $7.3\%$ \\
$4<H\le5$ & $2500$ & $72.5$ & $489$ & $7022$ & $3.4\%$ \\
$5<H\le6$ & $2500$ & $103$ & $1126$ & $19010$ & $1.6\%$ \\
$6<H\le7$ & $2500$ & $165.5$ & $2723$ & $43998$ & $0.6\%$ \\
\bottomrule
\end{tabular}
\end{table}

\begin{figure}[ht]
\centering
\includegraphics[width=0.6\textwidth]{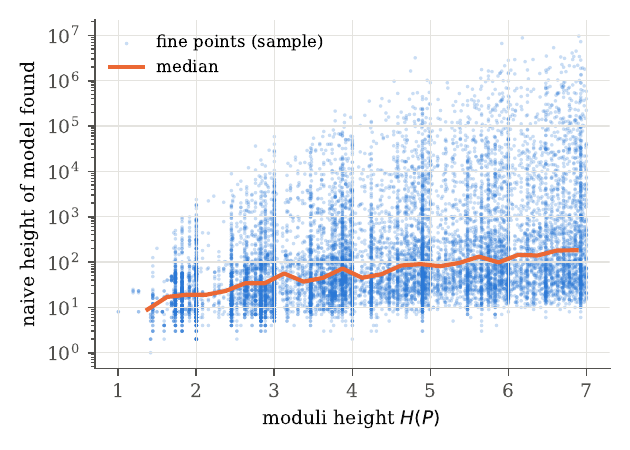}
\caption{Naive height of the model produced by \cref{alg:small} against the moduli height, for the fine points of the census
in the sample of \cref{tab:hmin}, with the running median.}
\label{fig:models}
\end{figure}

\section{Integer points in orbits}
\label{sec:orbits}

The descent and minimisation of \cref{sec:models} make the census an ordering of the rational cubic maps themselves, and
not only of their moduli points. In this section we use it to formulate the cubic case of the conjecture of \cite[Conj.~3.47]{silverman-2007-ads} on
the number of integers in a wandering orbit, and we record what follows from known results on the generic stratum. No
orbit statistics are asserted here.

\subsection{The arithmetic normalisation}
Write $\phi=f/g\in\Q(z)$ with $f,g$ coprime and a primitive integral coefficient vector, put $d=\max(\deg f,\deg g)$,
$F_\phi(X,Y)=Y^df(X/Y)$, $G_\phi(X,Y)=Y^dg(X/Y)$, and $R(\phi)=|\Res_d(F_\phi,G_\phi)|$, the resultant of two binary forms of
degree $d$. We call $\phi$ \emph{resultant-minimal} if $R(\phi)=\min_{A\in\PGL_2(\Q)}R(\phi^A)$. This is a minimum within
a rational conjugacy class, distinct from the minimisation of the naive height in \cref{sec:models}: by
\cref{ex:nonminimal}, a model of smallest naive height need not be resultant-minimal. Minimality under the
affine transformations $z\mapsto az+b$, $a\in\Q^\times$, $b\in\Q$, and under all of $\PGL_2(\Q)$ agree
\cite[Prop.~2.10]{bruin-molnar-2012}.

For $\alpha\in\Q$ write $\mathcal O_\phi(\alpha)=\{\phi^n(\alpha):n\ge0\}$, where $\phi^n$ is the $n$-th iterate, and
$N_\Z(\phi,\alpha)=\#(\mathcal O_\phi(\alpha)\cap\Z)$; the point $\alpha$ is \emph{wandering} if its orbit is infinite.
The number $N_\Z(\phi,\alpha)$ is finite when $d\ge2$ and $\phi^2\notin\Q[z]$ \cite[\S3.7]{silverman-2007-ads}, and it is
conjectured that for resultant-minimal $\phi$ it is bounded by a constant depending only on $d$
\cite[Conj.~3.47]{silverman-2007-ads}.
The moduli height orders the maps, but it cannot replace the normalisation.

\begin{exa}[Fixed moduli height without minimality]\label{ex:orbit-scaling}
Let $\phi(z)=\dfrac{z^3+z+1}{z^2}$ and $\psi_D(z)=D\phi(z/D)=\dfrac{z^3+D^2z+D^3}{z^2}$ for integers $D\ge1$. The orbit of
$1$ under $\phi$ is positive and strictly increasing, since $\phi(x)=x+x^{-1}+x^{-2}>x$ for $x>0$, so $1$ is wandering.
Since $\psi_D^n(D)=D\phi^n(1)$, choosing $D$ divisible by the denominators of the first $N$ points of this orbit makes the
orbit of $D$ under $\psi_D$ begin with $N$ integers; for example $\psi_9$ gives $9\mapsto27\mapsto31\mapsto33031/961$.
All the $\psi_D$ are conjugate over $\Q$, so they have the same moduli point, $[8:0:0:16:-2:-2]$, of height $8^{1/2}$,
while $R(\phi)=1$ and $R(\psi_D)=D^6$. The second iterates are not polynomials: $0$ is a pole and $\infty$ is fixed.
The point lies in $U$: the fixed-point quartic of $\phi$ is $Y^3(X+Y)$, so an automorphism fixes $\infty$ and $-1$, hence
also the only finite pole $0$, and is the identity. Thus no bound in terms of the moduli height alone can hold without
resultant minimality, even on $U$.
\end{exa}

\subsection{A height formulation of the cubic conjecture}
For $P\in\M_3(\Q)$ let $\mathcal R(P)$ be the set of resultant-minimal $\phi\in\Rat_3(\Q)$ with $[\phi]=P$. If $P$ has
automorphisms, $\mathcal R(P)$ contains the minimal models of all rational forms of $P$, not only the conjugates of one
of them. Put
\[
\mathcal I_3(P)=\sup\bigl\{N_\Z(\phi,\alpha):\ \phi\in\mathcal R(P),\ \phi^2\notin\Q[z],\ \alpha\in\Q\ \text{wandering for}\ \phi\bigr\}
\]
(with $\sup\varnothing=0$, in particular at a coarse point) and, for $B\ge1$,
\[
M_3(B)=\sup\{\mathcal I_3(P):\ P\in\M_3(\Q),\ H(P)\le B\}.
\]

\begin{conj}\label{conj:integral-orbits}
There is a constant $C_3$ such that $M_3(B)\le C_3$ for every $B\ge1$.
\end{conj}

Since every rational cubic map has a moduli point of finite height and $\mathcal I_3$ admits all minimal models and all
wandering starting points, this is exactly the cubic case of \cite[Conj.~3.47]{silverman-2007-ads}, rewritten through
the height filtration. A bound $M_3(B)\le F(B)$ growing with $B$ would be weaker. Minimal cubic maps with ten integers
in an orbit are given in \cite[\S7]{bruin-molnar-2012}, so $C_3\ge10$. The search there prescribes the first
integers of an orbit and interpolates a map through them; the census gives instead the complete list of moduli points of
bounded height, and descent decides which of them carry rational maps.

\subsection{Rational forms}
A moduli point and a rational conjugacy class are different things, and the difference matters here.

\begin{lem}\label{lem:unique-rational-form}
If $P\in U(\Q)$ is fine, all maps over $\Q$ in the class $P$ are conjugate by elements of $\PGL_2(\Q)$.
\end{lem}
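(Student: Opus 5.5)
Let $\phi,\psi\in\Rat_3(\Q)$ both lie in the class $P\in U(\Q)$. I want to show that the unique $\sigma\in\PGL_2(\overline\Q)$ with $\psi=\phi^\sigma$ is rational. Uniqueness holds because $\Aut(\phi)$ is trivial. For $\tau\in\operatorname{Gal}(\overline\Q/\Q)$, apply $\tau$ to $\psi=\sigma^{-1}\circ\phi\circ\sigma$. Since $\phi$ and $\psi$ have rational coefficients, this gives $\psi=(\sigma^\tau)^{-1}\circ\phi\circ\sigma^\tau$. Then $\sigma^\tau\sigma^{-1}$ commutes with $\phi$, so it lies in $\Aut(\phi)=1$. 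Hence $\sigma^\tau=\sigma$ for every $\tau$, and $\sigma\in\PGL_2(\overline\Q)^{\operatorname{Gal}}=\PGL_2(\Q)$.

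The last equality is the only point needing care. Galois invariants of $\PGL_2(\overline\Q)$ are $\PGL_2(\Q)$ by Hilbert 90: the exact sequence $1\to\overline\Q^\times\to\GL_2\to\PGL_2\to1$ gives the injection of $\PGL_2(\Q)/\dots$ into $H^1(\Q,\overline\Q^\times)=0$. Concretely, a fixed class $[A]$ has $A^\tau=c_\tau A$ with $c_\tau$ a cocycle. By Hilbert 90, $c_\tau=b^\tau/b$, so $b^{-1}A$ is rational.

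An alternative, more in the spirit of the paper, avoids group cohomology. As in the proof of \cref{thm:criterion}, I would rescale $\phi$ and $\psi$ so that both pairs $(I,J)$ have exactly the invariants $\xi$. This is possible because the rational representatives differ only by $\lambda^{\w}$, $\lambda\in\Q^\times$, as in \cref{lem:ratrep}. I treat the case $\mm\ne0$; if $\mm=0$, I use $(u_1,u_2,v_3)$ with $\xi_5\ne0$ instead. The covariants $u_1,u_2,u_3$ of the two pairs are then rational bases of $V_2$ with the same Gram matrix $\widetilde G$. The linear map sending one basis to the other is therefore a rational isometry of $V_2$. Up to sign it is proper, and proper isometries are $\SO(V_2)(\Q)=\PGL_2(\Q)$. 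It carries $J$ to $J'$ and $(I,J)_2$ to $(I',J')_2$. By the uniqueness argument with the pairings $T_{kl}$, it also carries $I$ to $I'$. So the two maps are conjugate by the corresponding $\sigma\in\PGL_2(\Q)$.

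The main obstacle in this second route is the sign issue. One must check that the improper choice $-g$ also still gives a substitution matching $u_3$. This is handled exactly as in the proof of \cref{thm:criterion}, so I would present the first, Galois-theoretic argument as the proof and mention the second as the constructive version.
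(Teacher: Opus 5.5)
Your Galois-theoretic argument is the paper's proof: $\sigma^\tau\sigma^{-1}\in\Aut(\phi)=1$ makes $\sigma$ Galois-invariant, and your Hilbert~90 step only spells out the identification $\PGL_2(\overline\Q)^{\operatorname{Gal}(\overline\Q/\Q)}=\PGL_2(\Q)$, which the paper takes for granted. In your constructive alternative the sign worry does not arise: $g$ matches two bases coming from actual pairs with the same invariants, and the identity $(gu,gv)_1=\det(g)\,g\bigl((u,v)_1\bigr)$ (respectively, the fact that both coefficient matrices of $(u_1,u_2,v_3)$ have determinant $2\xi_5$) forces $g$ to be proper.
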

\begin{proof}
Let $\psi=\phi^A$ with $\phi,\psi$ defined over $\Q$ and $A\in\PGL_2(\overline\Q)$. For every
$\tau\in\operatorname{Gal}(\overline\Q/\Q)$ also $\psi=\phi^{A^\tau}$, so $A^\tau A^{-1}$ is an automorphism of $\phi$,
which is trivial since $P\in U$. Hence $A$ is fixed by Galois, and $A\in\PGL_2(\Q)$.
\end{proof}

On the strata with automorphisms a single moduli point can carry infinitely many rational forms, all minimal.

\begin{prop}\label{prop:fixed-height-twists}
For squarefree integers $t\ge1$ the maps $\phi_t(z)=\dfrac{tz^3+z}{tz^2+2}$ have the same moduli point
$[56:0:0:0:-2:2]\in\Lo1$, are resultant-minimal with $R(\phi_t)=2t^3$, are pairwise not conjugate over $\Q$, and have
$\phi_t^2\notin\Q[z]$.
\end{prop}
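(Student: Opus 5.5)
Four things have to be shown for $\phi_t(z)=(tz^3+z)/(tz^2+2)$: the moduli point is independent of $t$, $R(\phi_t)=2t^3$ is minimal, the $\phi_t$ are pairwise not $\Q$-conjugate, and $\phi_t^2\notin\Q[z]$.

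\emph{Common moduli point.} I would write $\phi_t=\rho_t^{-1}\circ\phi_1\circ\rho_t$ over $\overline\Q$ with $\rho_t(z)=z/\sqrt t$. Indeed
\[
\sqrt t\,\phi_1(z/\sqrt t)=\sqrt t\,\frac{z^3/t^{3/2}+z/\sqrt t}{z^2/t+2}=\frac{z^3+tz}{z^2+2t},
\]
which is not yet $\phi_t$. So I would instead look for the right scaling $z\mapsto cz$: the conjugate is $c^{-1}\phi_1(cz)=(c^2z^3+z)/(c^2z^2+2)$, and $c^2=t$ gives exactly $\phi_t$. Hence all $\phi_t$ are $\overline\Q$-conjugate to $\phi_1$, and their moduli point is that of $\phi_1$. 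I would compute $(I,J)$ for $\phi_1$ from $f_0=x^3+xy^2$, $f_1=x^2y+2y^3$ and evaluate \cref{eq:xi}, checking that the canonical representative is $[56:0:0:0:-2:2]$. Since $\xi_2=\xi_3=0$, the point lies in $\Lo1$. Its automorphism is $z\mapsto-z$, because $\phi_1$ is odd.

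\emph{Resultant.} The resultant of $tx^3+xy^2$ and $tx^2y+2y^3$ is computed directly. Factoring out $x$ and $y$ gives the product of $\Res(x,y)$-type terms and $\Res(tx^2+y^2,tx^2+2y^2)$. This yields $\pm2t^3$, up to the normalisation of $\Res_3$, which I would verify.

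\emph{Minimality.} Conjugation by a primitive matrix changes $\operatorname{ord}_p R$ by multiples of $6$; the paper notes that $R$ changes by $p^{12-6e}$. Since $\operatorname{ord}_p(2t^3)\le 4<6$ for squarefree $t$ and every prime $p$, no conjugate has smaller resultant at any prime. By \cite{bruin-molnar-2012} minimality is local at each prime, so $\phi_t$ is resultant-minimal.

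\emph{Non-conjugacy.} This is the main point. If $\phi_s^A=\phi_t$ with $A\in\PGL_2(\Q)$, then $(\rho_s A\rho_t^{-1})$ conjugates $\phi_1$ to itself, so it lies in $\Aut(\phi_1)$. I would compute $\Aut(\phi_1)$: the stratum $\Lo1$ has group $C_2$, so $\Aut(\phi_1)=\{z,-z\}$. This requires checking that the point is not in $\Lo4$, via $\xi_0\xi_1+3\xi_4=-6\ne0$. It follows that $A=\pm\rho_s^{-1}\rho_t$, the map $z\mapsto\pm\sqrt{s/t}\,z$. Such an $A$ is rational only if $st$ is a square, which for squarefree $s,t$ forces $s=t$. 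Equivalently, the twists are classified by $H^1(\Q,C_2)=\Q^\times/\Q^{\times2}$ and $\phi_t$ corresponds to the class of $t$.

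\emph{$\phi_t^2\notin\Q[z]$.} The iterate $\phi_t$ has finite poles at $z^2=-2/t$, which are non-real, and $\phi_t(\infty)=\infty$. The preimages of these poles under $\phi_t$ are finite points, so they are poles of $\phi_t^2$. Hence $\phi_t^2$ has finite poles and is not a polynomial.

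I expect the main obstacle to be the bookkeeping: matching the exact invariants and the normalisation of the resultant. Everything else is structural.
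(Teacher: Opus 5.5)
Your proposal is correct. For the common moduli point, the resultant, minimality and $\phi_t^2\notin\Q[z]$ it is essentially the paper's argument. Minimality uses $v_p(2t^3)\le4<6$ together with the invariance of $v_pR$ modulo $6$ under $\Q$-conjugacy. For the pole argument, the paper uses the poles themselves, which $\phi_t$ sends to the fixed point $\infty$, while you use their preimages; both work.

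The genuine difference is in the non-conjugacy step. The paper gets it directly from minimality: $\Q$-conjugate resultant-minimal maps have the same $R$, and $2s^3\neq 2t^3$. You argue by twisting instead. A $\Q$-conjugacy $\phi_s^A=\phi_t$ makes $\rho_sA\rho_t^{-1}$ an automorphism of $\phi_1$. Since $\Aut(\phi_1)=\{\pm z\}$, this forces $A(z)=\pm\sqrt{t/s}\,z$, which is rational only when $st$ is a square.

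Your route does not use minimality and shows a little more: the $\phi_t$ are the twists of $\phi_1$ by distinct classes in $\Q^\times/\Q^{\times2}$. The cost is that you need the exact automorphism group, which the paper's one-line argument avoids. You can obtain it without the stratum table. The fixed-point quartic of $\phi_1$ is $I=-xy^3$, so an automorphism must fix the simple fixed point $0$ and the triple fixed point $\infty$. It is therefore $z\mapsto\lambda z$, and $\lambda^{-1}\phi_1(\lambda z)=\phi_1(z)$ forces $\lambda^2=1$.

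One notational slip: after discarding $\rho_t(z)=z/\sqrt t$ you keep writing $\rho_t$. With the correct $\rho_t(z)=\sqrt t\,z$ you get $A(z)=\pm\sqrt{t/s}\,z$ rather than $\pm\sqrt{s/t}\,z$, which changes nothing in the conclusion.
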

\begin{proof}
With $A_t(z)=\sqrt t\,z$ one has $\phi_t=\phi_1^{A_t}$, so the moduli point does not depend on $t$; it is the one given,
which lies on $\xi_2=\xi_3=0$. The forms $tX^3+XY^2$ and $tX^2Y+2Y^3$ have coprime integral coefficients and resultant
$2t^3$. Since $t$ is squarefree, $v_p(2t^3)\in\{0,1,3,4\}$ is smaller than $6$ at every prime, and for odd degree $d$ a
model with $v_p(\Res)<2d$ is minimal at $p$ \cite[Lemma~3.1]{bruin-molnar-2012}. Distinct $t$ give distinct minimal
resultants, so the maps are not conjugate over $\Q$. Finally the roots of $tz^2+2$ are poles which $\phi_t$ maps to the
fixed point $\infty$, so they are poles of $\phi_t^2$.
\end{proof}

So the finitely many moduli points of bounded height do not give finitely many rational conjugacy classes on all of
$\M_3$: the maps $\phi_t$ have constant moduli height and unbounded minimal resultant. On the fine points of $U$ they do,
by \cref{lem:unique-rational-form}.

\subsection{A bound on the generic stratum}
Two points of $\P^1(\Q)$ are integral relative to each other if their reductions are distinct at every prime; a rational
number is an integer exactly when it is integral relative to $\infty$, and $\PGL_2(\Z)$ preserves relative integrality.
A point is exceptional for $\psi$ if its backward orbit is finite; $\infty$ is exceptional for $\phi$ exactly when
$\phi^2$ is a polynomial. We use the estimate of \cite[Thm.~1.2]{yap-2026-integral}, specialised
to $K=\Q$, $S=\{\infty\}$ and degree $3$: there is a constant $c_3$ such that for $\psi\in\Rat_3(\Q)$, every
$\gamma\in\P^1(\Q)$ that is not preperiodic and every $\eta\in\P^1(\Q)$ that is not exceptional,
\begin{equation}\label{eq:yap}
\#\{n\ge0:\ \psi^n(\gamma)\ \text{is integral relative to}\ \eta\}\ \le\ c_3+\log_3^+\frac{h(\psi)+1}{\widehat h_\psi(\gamma)},
\end{equation}
where $h(\psi)=\log\max|c_i|$ is the logarithmic height of the primitive coefficient vector, $\widehat h_\psi$ the
canonical height and $\log_3^+x=\max(0,\log x/\log3)$. The bound does not depend on $\eta$.

For a fine $P\in U(\Q)$ and a model $\phi_0$ of $P$ over $\Q$ put
\[
m(P)=\min\{\widehat h_{\phi_0}(\alpha):\ \alpha\in\P^1(\Q)\ \text{wandering for}\ \phi_0\}>0 ;
\]
the minimum exists and is positive by the Northcott property, and it does not depend on $\phi_0$, since a rational
conjugation is a bijection of $\P^1(\Q)$ preserving canonical heights. By \cref{lem:unique-rational-form} the minimal
models of $P$ form one rational conjugacy class, and by \cite[Prop.~6.5]{bruin-molnar-2012} they form finitely many
$\PGL_2(\Z)$-orbits $\mathcal C_1(P),\dots,\mathcal C_{s(P)}(P)$. Put
$r(P)=\max_j\min_{\psi\in\mathcal C_j(P)}h(\psi)$, the largest of the smallest logarithmic heights in these orbits.

\begin{prop}\label{prop:generic-orbit-bound}
For every fine $P\in U(\Q)$,
\[
\mathcal I_3(P)\le c_3+\log_3^+\frac{r(P)+1}{m(P)} .
\]
Consequently $\sup\{\mathcal I_3(P):\ P\in U(\Q)\ \text{fine},\ H(P)\le B\}$ is finite for every $B\ge1$.
\end{prop}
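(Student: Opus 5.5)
The plan is to apply the estimate \cref{eq:yap} with $\eta=\infty$ to a well-chosen model in each $\PGL_2(\Z)$-orbit of minimal models, and to observe that $N_\Z$ is an invariant of the orbit. Fix a fine $P\in U(\Q)$. Take $\phi\in\mathcal R(P)$ with $\phi^2\notin\Q[z]$ and $\alpha\in\Q$ wandering for $\phi$. By \cref{lem:unique-rational-form} and \cite[Prop.~6.5]{bruin-molnar-2012}, $\phi$ lies in one of the orbits $\mathcal C_j(P)$. Let $\psi=\phi^{\gamma}=\gamma^{-1}\circ\phi\circ\gamma$, $\gamma\in\PGL_2(\Z)$, be an element of $\mathcal C_j(P)$ of smallest logarithmic height, so that $h(\psi)\le r(P)$.

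Put $\eta=\gamma^{-1}(\infty)$ and $\beta=\gamma^{-1}(\alpha)$. Then $\psi^n(\beta)=\gamma^{-1}(\phi^n(\alpha))$, and since $\PGL_2(\Z)$ preserves relative integrality, $\phi^n(\alpha)$ is an integer exactly when $\psi^n(\beta)$ is integral relative to $\eta$. The orbit of $\alpha$ is infinite, so distinct $n$ give distinct points, and $N_\Z(\phi,\alpha)$ equals the number of $n\ge0$ with $\psi^n(\beta)$ integral relative to $\eta$.

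Next check the hypotheses of \cref{eq:yap}. The point $\beta$ is wandering, hence not preperiodic, for $\psi$. The point $\eta$ is exceptional for $\psi$ if and only if $\infty$ is exceptional for $\phi$, since conjugation carries backward orbits to backward orbits. By the remark before \cref{eq:yap}, this happens exactly when $\phi^2\in\Q[z]$, which is excluded.

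Now apply \cref{eq:yap} to $\psi$, $\beta$, $\eta$. This gives $N_\Z(\phi,\alpha)\le c_3+\log_3^+\bigl((h(\psi)+1)/\widehat h_\psi(\beta)\bigr)$. Here $h(\psi)+1\le r(P)+1$, and $\widehat h_\psi(\beta)\ge m(P)$ because $m(P)$ is independent of the model and $\beta$ is a rational wandering point. The function $\log_3^+$ is nondecreasing, so the bound $c_3+\log_3^+\bigl((r(P)+1)/m(P)\bigr)$ follows, uniformly in $\phi$ and $\alpha$. Taking the supremum gives the first claim; if no admissible $(\phi,\alpha)$ exists, $\mathcal I_3(P)=0$ and the bound holds trivially.

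For the consequence, the set of $P\in\M_3(\Q)$ with $H(P)\le B$ is finite (\cref{def:height} and the remark after it). Each fine $P\in U(\Q)$ has finite $r(P)$, since there are finitely many orbits, and $m(P)>0$, so the supremum is a maximum of finitely many finite numbers.

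The main point to verify carefully is the positivity and model-independence of $m(P)$. This needs a Northcott argument: rational points of canonical height below any bound are finite in number, and the preperiodic ones have canonical height $0$, so the infimum over wandering points is attained and positive. Independence of the model uses $\widehat h_{\phi^A}=\widehat h_\phi\circ A$ for $A\in\PGL_2(\Q)$. The paper asserts both facts in the definition of $m(P)$, and I would simply invoke them.
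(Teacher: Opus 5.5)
Your proposal is correct and follows the paper's proof essentially step for step. You choose a height-minimising $\psi$ in the $\PGL_2(\Z)$-orbit of $\phi$ (your $\gamma$ is the paper's $A^{-1}$), transfer integrality of $\phi^n(\alpha)$ to integrality of $\psi^n(\gamma^{-1}\alpha)$ relative to $\gamma^{-1}(\infty)$, check the wandering and non-exceptionality hypotheses, and apply \cref{eq:yap} with $h(\psi)\le r(P)$ and $\widehat h_\psi\ge m(P)$; the finiteness consequence likewise comes from the finiteness of points of bounded height, exactly as in the paper.
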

\begin{proof}
Let $\phi\in\mathcal R(P)$ with $\phi^2\notin\Q[z]$, and $\alpha\in\Q$ wandering. Then $\phi$ lies in some
$\mathcal C_j(P)$; let $\psi$ attain the minimum of $h$ on it and write $\phi=\psi^A$ with $A\in\PGL_2(\Z)$. Since $A$
preserves relative integrality, $\phi^n(\alpha)\in\Z$ if and only if $\psi^n(A\alpha)$ is integral relative to
$A(\infty)$, which is not exceptional for $\psi$ because $\infty$ is not exceptional for $\phi$. As $\alpha$ is
wandering, distinct $n$ give distinct points, so $N_\Z(\phi,\alpha)$ is the left side of \cref{eq:yap} for $\psi$,
$\gamma=A\alpha$, $\eta=A(\infty)$, and $h(\psi)\le r(P)$, $\widehat h_\psi(A\alpha)=\widehat h_\phi(\alpha)\ge m(P)$.
The last assertion follows because there are finitely many points of height at most $B$.
\end{proof}

The proposition is a consequence of the cited results. It gives neither an explicit function of $B$ nor a bound
independent of $B$, and it does not cover the symmetry strata, where \cref{prop:fixed-height-twists} shows that the
rational forms must be treated separately. A bound independent of $B$ on $U$ would follow from estimates
$r(P)+1\le A(1+\log H(P))$ and $m(P)\ge a(1+\log H(P))$ with constants $A,a>0$; neither is established here.

\subsection{Orbit searches}
By \cref{lem:unique-rational-form}, the fine points of $U$ of height at most $B$ give a finite list of rational
conjugacy classes, and the census provides it. A search would construct a model by \cref{prop:explicit}, minimise the
resultant as in step (2) of \cref{alg:small}, keep the reduction within the minimal resultant locus (the final model must
be resultant-minimal, which \cref{ex:nonminimal} shows is not automatic for a model of small naive height), and iterate
rational starting points exactly. Such searches give lower bounds for $M_3(B)$; a finite search gives no upper bound.

\section{Conclusion}

The rational points of the moduli space $\M_3$ of cubic rational maps split into fine and coarse points, and on the
generic stratum $U$ the splitting is governed by one quaternion algebra, $(-\mm,-2\xi_0)$, built from two invariants of even weight: the
discriminant of the divergence form and a principal minor of the Gram matrix of three quadratic covariants. Its
arithmetic follows the pattern of genus two, with the two loci of maps with an involution playing the role of the
bielliptic locus, but with a geometric twist: these loci have codimension two, the obstruction class has no
ramification divisor on $\M_3$, and the ramification reappears only on the exceptional divisors of blow-ups along
loci of transverse $A_1$-singularities. Local densities predict that the locus $\Lo2$ behaves like a divisor, while reduction into
$\Lo1$ is rare, and so the fine proportion should decay like $(\log B)^{-1/2}$, as in genus two. The census of all
\Nseven{} points of height at most $7$ agrees with the mechanism without exception and shows it at work on every
shell, with the fine proportion falling to $0.680$; it also shows how far the asymptotic regime is: at height $7$ a
fifth of the points still lie on sheets where the divergence has rational roots and every point is fine.

As in genus two, the obstruction lives at primes that vary from point to point; but here reciprocity transports it to
the fixed places $\infty$, $2$, $3$, where $83\%$ of the coarse points are obstructed. The second stage, the
construction of minimal models, is served by an algorithm: the explicit descent of \cref{prop:explicit} with
minimisation, reduction and a search off the minimal resultant locus recovers the exact minimal naive height, from the
moduli point alone, in all but one of the $7\,409$ test cases in which it is known. The same construction, with resultant
minimality kept, orders the cubic maps for the conjecture of \cite[Conj.~3.47]{silverman-2007-ads} on integers in
orbits, whose cubic case we have stated through the moduli height.

Several questions remain open: a proof of \cref{conj:density}, which requires a sieve on the rational points of the double
cover $\Xb$; the asymptotics of $N(B)$, conjecturally of order $B^8\log B$; and an analysis of the strata with automorphisms
and of the prime $2$, which, as the place that carries most of the reciprocity, deserves a description by congruence
conditions as complete as the one for the odd primes. For integer orbits, explicit bounds for the two quantities of
\cref{prop:generic-orbit-bound} in terms of the moduli height would make the finiteness of \cref{sec:orbits} effective.

\bibliographystyle{amsplain}
\bibliography{sh-136}

\end{document}